\documentclass[11pt]{amsart}

\usepackage[margin=1.25in]{geometry}
\usepackage{amscd,amssymb, amsmath, setspace, graphicx,color,mathtools}
\usepackage{url}
\usepackage{hyperref}
\usepackage[utf8]{inputenc} 
\usepackage{float}
\usepackage{verbatim}
\usepackage{tikz}

\makeatletter
\@namedef{subjclassname@1991}{2020 Mathematics Subject Classification}
\makeatother

\DeclareMathOperator{\rlct}{RLCT}

\DeclareMathOperator{\jac}{Jac}

\numberwithin{equation}{section}

\theoremstyle{plain}
\newtheorem{theorem}{Theorem}[section]

\newtheorem{conj}[theorem]{Conjecture}
\newtheorem{lemma}[theorem]{Lemma}

\newtheorem{proposition}[theorem]{Proposition}

\theoremstyle{definition}
 
\newtheorem{definition}[theorem]{Definition}
\newtheorem{remark}[theorem]{Remark}

\newtheorem*{ex*}{Example}

\newtheorem{example}[theorem]{Example}

\newcommand{\Z}{\mathbb{Z}}
\newcommand{\C}{\mathbb{C}}
\newcommand{\R}{\mathbb{R}}
\newcommand{\Q}{\mathbb{Q}}

\newcommand{\gt}{>}

\title[Calculation errors and singularities]{Estimation of calculation errors \\ and resolution of singularities} 

\author[V. Fadinger-Held]{Victor Fadinger-Held}
\address{Mittelschule Bad Waltersdorf, Austria}
\email{victor.fadinger@ms-badwaltersdorf.at}
\author[D. Windisch]{Daniel Windisch}
\address{Department of Computer Science, KU Leuven, Belgium}
\email{daniel.windisch.math@gmail.com}

\begin{document}

\begin{abstract}
There are two possible answers to ``$(a+b)^2 = a^2 + b^2$''. Either ``this is completely wrong'' or ``I assume, you are in characteristic $2$''. If we go for the first answer, we could ask ourselves: ``How wrong is it actually?'' 
In this expository article, 
% we highlight a close connection between the asymptotics of the error $\vert f(x) - g(x)\vert$, where $f$ and $g$ are polynomials in several indeterminates with real coefficients, and resolution of singularities over real numbers. More precisely, 
we introduce and use resolution of singularities and the real log canonical threshold (RLCT) as a measure for such errors. The RLCT is an invariant with connections to the minimal model program and, as recently discovered and investigated, to Bayesian statistics and machine learning. Inspired by typical mistakes from high school mathematics, we compute this invariant for several classes of examples, among them the error arising from the ``freshman's dream'' above and from a flawed computation of means in elementary statistics. In some examples, we highlight how a computer algebra software can be used in order to solve this problem.
\end{abstract}

\keywords{real log canonical thresholds, resolution of singularities, real algebraic geometry, error estimation, school mathematics}
\subjclass{14E15, 
14B05, 
14-02,
97N20}

\maketitle

\section{When the right solution is still wrong}\label{sec:school}

Let $f$ and $g$ be two polynomials in $d$ variables with coefficients from the field $\R$ of real numbers and let $a \in \R^d$. We ask the following basic question: \emph{To what extend does a small change $a + \delta$ affect the difference $\vert f(a) - g(a) \vert$ and change it to $\vert f(a+\delta) - g(a+\delta) \vert$?}

To make the question more precise, set $c = f(a) - g(a)$ and $\tilde{g} = g - c$. Then $f(a) - \tilde{g}(a) = 0$, so we can assume without loss of generality that $f(a) - g(a) = 0$. Now, we might alternatively ask: \emph{For ``how many'' $x$ in a neighborhood of $a \in \R^d$ with $f(a) = g(a)$ is the distance $\vert f(x) - g(x) \vert$ ``small''?} 

For instance, if $d = 2$, $a = 0 \in \R^2$ and the function $f-g$ happens to be equal to $x^2 + y^2$, then a point $(x,y) \in \R^2$ satisfies $\vert f(x,y) - g(x,y) \vert \leq \varepsilon$ if and only if $(x,y)$ is an element of the disc with center $0$ and radius $\sqrt{\varepsilon}$. Therefore, the area of this disc, which is $\pi\cdot \varepsilon$, is a good measure for ``how many'' $(x,y)$ lead to ``small'' differences between $f$ and $g$. Since we are interested in small changes of $a$, we consider the asymptotics of this area as $\varepsilon \to 0$.

To go to higher dimensions, we fix a compact neighborhood $W \subseteq \R^d$ of $a$ and consider the $d$-dimensional Lebesgue measure on $\R^d$ which we denote by $\operatorname{vol}$. Our object of interest is the set
\[
T(\varepsilon) = \{x \in W \mid \vert f(x) - g(x)\vert \leq \varepsilon\}
\]
and we want to understand the asymptotics $\operatorname{vol} T(\varepsilon)$ as $\varepsilon \to 0$. It can be shown that $\operatorname{vol} T(\varepsilon)$ is asymptotically equal
to $C \cdot \varepsilon^\lambda (-\log \varepsilon)^{m-1}$ for some $C \in \R_{\gt 0}$ and $(\lambda,m) \in \Q_{\gt 0} \times \Z_{\gt 0}$. Therefore, the error $ \vert f(x) - g(x)\vert $ can be considered small if and only if $\lambda$ is small and $m$ is large, see also Remark~\ref{remark:asymptotics}. The pair $(\lambda,m)$ is called the \emph{real log canonical threshold} (RLCT) of $f-g$ at $a$ and was studied towards asymptotics of $\operatorname{vol} T(\varepsilon)$ in~\cite{Lin:tubes}.  Furthermore, this birational invariant of $f-g$ appears in several other contexts of pure and applied mathematics. Two most fascinating connections arise from the minimal model program and from Bayesian statistics, respectively: 
\begin{enumerate}
	\item The complex analogue of the RLCT, called the \emph{log canonical threshold} (LCT), is a measure for multiples of divisors on smooth complex varieties to be log canonical. Log canonical divisors are the ones for which minimal models are expected to exist in the minimal model program~\cite[Chapter 2]{Kollar_Mori_1998}. Moreover, Shokurov's ACC conjecture on LCTs~\cite{Shokurov1992}, which is now a theorem~\cite{ACCsolved2014}, has a direct connection to \emph{termination of flips}, one of the two open questions within the minimal model program. Indeed, the ACC conjecture yields a partial inductive proof of termination of flips. In alignment with this, we will explain in Section~\ref{sec:resolution} how to compute the RLCT by using resolution of singularities and apply this approach in the following sections.
	\item In the context of Bayesian statistic and, more precisely, in \emph{singular learning theory}~\cite{watanabe:book, Watanabe2018BayesianStatistics}, the RLCT (also called \emph{learning coefficient} in this context) determines the large sample asymptotics of the marginal likelihood of a statistical model with respect to a given data set. In the context of machine learning, this can be used to study learning and generalization behavior of models~\cite{DeepLearningIsSingular,JiayiGrokking}. In data science, model selection techniques employing the RLCT can be used for structural analysis of high-dimensional data~\cite{sbic:2017}. See~\cite{aoyagi2025neuralnetworks, drton2026singularlearningtheoryfactor} for recent results on learning coefficients of statistical models.
\end{enumerate}

For us, a starting point to think about error estimation through the lens of RLCTs was the following example from high school mathematics.

\begin{example}[Average fuel consumption]\label{example:fuel-consumption}
\emph{Mr. Smith keeps a record of the distance he drives between fill-ups and the amount of petrol he purchases. An excerpt from his fuel log is shown below.}
\begin{table}[H]
\centering
\begin{tabular}{c|c}
\textbf{Distance Driven (km)} & \textbf{Petrol Purchased (L)} \\
\hline
519 & 41.3 \\
555 & 42.2
\end{tabular}
\end{table}
\emph{Based on the data in the table, how many kilometers does Mr. Smith’s car travel per liter of petrol, on average? Round your answer to the nearest hundredth.}

Using the correct solution of first summing up the individual columns and then forming their quotient gives $12.86$. But also the solution suggested by a student of first forming the quotients in each row and then computing their arithmetic mean yields this number. The following question arises: \emph{Is this also a correct computation path?}

A quick check with other numbers leads to a clear \emph{no}. Conceptually, the second method gives the same weight to every summand in the arithmetic mean computation, which is of course flawed, but in this specific case pretty accurate because the values of the purchased petrol amounts are close to each other. 

More geometrically, we have a hypersurface defined on an open subset of $\R^2 \times \R^2$ given by
\[
\frac{x_1 + x_2}{y_1 + y_2} - \frac{1}{2} \cdot \left( \frac{x_1}{y_1} + \frac{x_2}{y_2} \right) = 0
\]
and the point $((519, 555),(41.3,42.2))$ lies sufficiently close to it. The natural question arises what ``sufficient'' means as we vary the point.

For later reference, we lift this example to the more general setting of columns of length $n$ in the table. This leads to the equation of rational functions 
\[
\frac{1}{n} \sum_{i = 1}^n \frac{x_i}{y_i} = \frac{\sum_{i = 1}^n x_i}{\sum_{i = 1}^n y_i}
\]
in variables $x_1,\ldots,x_n,y_1,\ldots,y_n$ that are evaluated in positive real numbers. The equation is equivalent to the following polynomial vanishing:
\[
p_n = \left( \sum_{i = 1}^n x_i \prod_{ i \neq j = 1 }^n y_j \right) \cdot \left( \sum_{i = 1}^n y_i \right)- n\cdot \left( \sum_{i = 1}^n x_i \right) \cdot \prod_{ j = 1 }^n y_j,
\]
which specializes to 
\[
p_2 = (x_1y_2 + x_2y_1)(y_1 + y_2) - 2(x_1+x_2)y_1y_2
\]
in the case $n = 2$.
\end{example}

Another classical mistake in high school mathematics is the so-called ``freshman's dream''. It constitutes the belief that $(x+y)^2$ equals $x^2 + y^2$ as polynomials or, equivalently, for all $x,y \in \R$.

\begin{example}[Freshman's dream]\label{example:freshman}
    Let $f = (x+y)^2 = x^2 + 2xy + y^2$ and $g = x^2 + y^2$. Then $f(x,y) = g(x,y)$ if and only if $xy = 0$, that is, $x = 0$ or $y = 0$. We will study error estimation for $f-g = 2xy$ in Example~\ref{example:easy-log-res}(2) and generalize this to higher dimensions by considering $(x_1 + \cdots + x_d)^2$ in Section~\ref{sec:freshman}.
\end{example}

The manuscript is structured as follows: In Section~\ref{sec:formalization}, we formalize some ideas around the problem and introduce the RLCT. In Section~\ref{sec:resolution}, we lay out the relation between resolution of singularities and error estimation, review some useful results, and give a list of examples for illustration. Section~\ref{sec:freshman} contains a full study of RLCTs for what we call the ``freshman's dream variety'', the real algebraic hypersurface defined by $(x_1 + \cdots+ x_n)^2 - (x_1^2 + \cdots + x_n^2)$. In the final Section~\ref{sec:petrol}, we showcase how to use computer algebra software to attack the problem of determining the RLCT. We purposely put this section at the end so that the reader should be able to follow and even code along using the knowledge from the previous sections. Namely, we use \emph{Macaulay2} to compute the RLCT of the ``petrol variety'' arising from Example~\ref{example:fuel-consumption}. The approach is valid for any dimension $n$. Our computational capacities allow us to determine the RLCT for $n \in \{2,3,4\}$ and to conjecture the general value.

\section{How wrong can a calculation path be?}\label{sec:formalization}

As illustrated by the petrol example in Section~\ref{sec:school}, the result of a calculation path for a computational problem can be correct while the calculation path itself is conceptually wrong. In this section, we will formalize what we mean by a calculation path and when we would consider it correct in our context. We will then introduce a way of measuring to which extend a small change of the input numbers of the example impacts the calculation error along a wrong calculation path. 

\subsection{What is a calculation path?} 
The structure of most basic computational problems, in particular, in high school mathematics is the following: One is given real numbers $a_1,\ldots,a_d \in \R$ and, in order to derive the correct solution, one applies a chain of manipulations to these numbers. These manipulations are -- in the vast majority of cases -- analytic functions. For instance, in Example~\ref{example:fuel-consumption}, we first form the sums $\sum_{i = 1}^n x_i$ and $\sum_{i = 1}^n y_i$ and then take their quotient, which is even a rational function. However, analytic functions like $\sin$, $\cos$ and $\exp$ or their (local) inverses frequently come up in higher grades.

To formalize this idea of an intended calculation path, we are given a chain of functions 
\begin{align*}
    &f_0(x_1,\ldots,x_d), \\
    &f_1(x_1,\ldots,x_d,z_1),\\
    &f_2(x_1,\ldots,x_d,z_1,z_2),\\
    & \ \ \  \vdots\\
    &f_\ell(x_1,\ldots,x_d,z_1,\ldots,z_\ell)
\end{align*}
and we want $f_0$ to be analytic at $a = (a_1,\ldots,a_d)$, $f_1$ to be analytic at $v_0 = ({a}, f_0({a}))$, $f_2$ to be analytic at $v_1 = ({a}, f_0 ({a}),f_1(v_0))$, and so on.
The correct result is then $f_\ell(v_{\ell-1})$. As this inductive process amounts to recursively plugging power series into power series, we can view this as a single function $f$ that is analytic in an open neighborhood of $a$. This leads us to the following:

\begin{definition}\label{definition:calculation-problem}
    A (\emph{high school}) \emph{mathematics exercise} is a pair $(a,f)$, where $a \in \R^d$ is a real vector and $f$ is a real, scalar-valued function that is analytic at $a$. We call $f$ the \emph{intended solution path}  and $f(a)$ the \emph{solution} of $(a,f)$.

    Another real, scalar-valued function $g$ that is analytic at $a$ is called a \emph{potential solution path} if $g(a) = f(a)$.
\end{definition}

Most of this paper will be about determining the asymptotics of the difference between an intended solution path and a potential solution path of an exercise.

\begin{example}\label{example:alternative-solution-path}
    In Example~\ref{example:fuel-consumption}, the function
    \[
    f = \frac{\sum_{i = 1}^n x_i}{\sum_{i = 1}^n y_i}
    \]
    is the intended solution path while 
    \[
     g = \frac{1}{n} \sum_{i = 1}^n \frac{x_i}{y_i} 
    \]
    is a potential solution path.
\end{example}

\begin{comment}
\begin{remark}\label{remark:ambiguity}
    There is some ambiguity in the definition of an alternative solution path $g$. It is, of course, possible that the exercise suggests implicitly that the intended solution $f$ should be evaluated only along some subspace $X \subseteq \R^d$ containing $a$ in which case the neighborhood $U$ of Definition~\ref{definition:calculation-problem} should be an open subset of $X$.\footnote{For instance, the exercise could be to compute the area of a square with edge length $x$. A student might first go to the formula $f(x,y) = x\cdot y$ for the area of a rectangle, but is supposed to evaluate $f$ only along the set $X = \{(x,y) \in \R_{\gt 0}^2 \mid x = y\}$.} Such an $X$ could be a linear subspace or, more generally, a smooth space defined by the vanishing of some analytic functions around $a$. This is, however, the most general that we could imagine in high school mathematics. In any case, such a smooth subspace can itself be analytically transformed to a real vector space of dimension less or equal to $d$, see Lemma~\ref{lemma:transversal}. In particular, the assumption that $U \subseteq \R^d$ is open in Definition~\ref{definition:calculation-problem} means no loss of generality.
\end{remark}
\end{comment}

\subsection{Error measurement for potential calculation paths}
In Example~\ref{example:fuel-consumption}, we were given the intended solution $f$ and a potential solution $g$ of an exercise $(a,f)$ and we wanted to figure out how bad a small change $a + \delta$ of $a$ effects the correctness of $g(a + \delta)$ as opposed to the intended value $f(a + \delta)$. One way of phrasing this is to set an error bound $\varepsilon > 0$ and to ask ``for how many values $w$ in a small neighborhood of $a$'' we are within that bound. More precisely, let $W\subseteq \R^d$ be an open neighborhood of $a$ with compact closure on which $f$ and $g$ are analytic. Write $p = f-g$. We consider the volume $\operatorname{vol} T_p(\varepsilon)$, that is, the standard $d$-dimensional Lebesgue measure of the set
\[
T_{p}(\varepsilon) =\{ w \in W \mid \vert p(w)\vert \leq \varepsilon \}.
\]
Of course, this volume depends heavily on the choice of $W$. The dependence is, however, a straightforward one that does not affect our perspective, as we will see now.

Since we are interested in small errors on $a$, it is convenient to ask for the asymptotics of $\operatorname{vol} T_p(\varepsilon)$ as $\varepsilon \to 0$. Moreover, we might choose $W$ arbitrarily small. The smaller this asymptotics, the worse the potential solution $g$. It turns out that, if we just choose $W$ small enough, then there exist $C > 0$, $\lambda \in \Q_{> 0}$ and $m \in \{1,\ldots,d\}$ such that 
\[
\operatorname{vol} T_p(\varepsilon) \approx C \cdot \varepsilon^\lambda (- \log \varepsilon)^{m-1},
\]
where $\log$ denotes the natural logarithm and the pair $(\lambda,m)$ is independent of the choice of $W$, ~\cite[Lemma 7.2]{Lin:tubes}. More precisely, there exists a choice for $W$ such that, for all open $W' \subseteq W$, the values of $\lambda$ and $m$ stay the same. The exact meaning of $\approx$ is explained in~\cite{Lin:tubes}. For us it is enough to understand that the two expressions are asymptotically equal, that is, their quotient goes to $1$ as $\varepsilon \to 0$.

The number $\lambda$ is called the \emph{real log canonical threshold} of $p$ at $a$ and $m$ is called its \emph{order}.
We write 
\[
\rlct_a(p) = (\lambda,m)
\]
and, by abuse of terminology, also call this pair the \emph{real log canonical threshold} of $p$ at $a$. The following result gives an alternative way to look at $\rlct_a(p)$. See~\cite{Atiyah:1970} for the original reference. The more recent paper~\cite[Corollary 3.10]{lin:phd} uses our terminology.

\begin{theorem}
    With the notations and conventions established above, we define the \emph{zeta function} $\R_{> 0} \to \R$ associated to $p$ around $a$ as
    \[
    \zeta_{p,a}(z) = \int_W \vert p(w)\vert^{-z}dw.
    \]
    It can be uniquely continued to a meromorphic function $\C \to \C$ all of whose poles are positive rational numbers. Its smallest pole is $\lambda$ and the order of this pole is $m$.
\end{theorem}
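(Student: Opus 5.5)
The plan is to follow Atiyah's approach: use Hironaka's resolution of singularities to reduce the zeta function locally to a product of one-variable integrals $\int_0^\delta t^{-k z}\,t^{h}\,dt$, whose meromorphic continuation and poles are explicit. Then relate the smallest pole and its order back to the volume asymptotics defining $\rlct_a(p)$ via a Mellin-transform (Abelian/Tauberian) argument.

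\emph{Resolution.} Since $p$ is real analytic near $a$ and (we may assume) not identically zero, Hironaka's theorem gives a real analytic manifold $M$ and a proper analytic map $\rho\colon M\to W'$, for some neighborhood $W'\supseteq \overline W$. This map is an isomorphism outside $\{p=0\}$, and every point of $M$ has local coordinates $u=(u_1,\dots,u_d)$ in which
\[
p\circ\rho(u)=\pm u_1^{k_1}\cdots u_d^{k_d},\qquad |\jac\rho(u)|=b(u)\,|u_1^{h_1}\cdots u_d^{h_d}|,
\]
with $b$ analytic and nonvanishing. Because $\rho^{-1}(\overline W)$ is compact, a finite partition of unity $\{\phi_\alpha\}$ subordinate to such charts (each supported in a cube) gives
\[
\zeta_{p,a}(z)=\sum_\alpha\int_{[-\delta,\delta]^d}\phi_\alpha(u)\,b(u)\prod_{j}|u_j|^{h_j-k_jz}\,du .
\]
There is a technical point about the non-smooth cutoff by $W$. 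I would replace $1_W$ by a smooth bump, or choose $W$ to be a small semianalytic box and resolve its boundary together with $p$. I will argue that this changes neither the smallest pole nor its order, which is also exactly why $(\lambda,m)$ does not depend on $W$.

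\emph{Continuation and poles.} In each chart, Taylor-expand $\phi_\alpha b$ in the variables $u_j$ with $k_j>0$. The identity $\int_0^\delta t^{s}\psi(t)\,dt=\sum_{n<N}\psi^{(n)}(0)\,\delta^{s+n+1}/\big(n!(s+n+1)\big)+\text{holomorphic}$ shows that each chart integral extends meromorphically to $\C$. Its poles lie among the values $z=(h_j+1+n)/k_j$, which are positive rationals. The pole order at a given $z$ is at most the number of indices $j$ that produce a pole there simultaneously. Uniqueness of the continuation follows from the identity theorem. The smallest pole is $\lambda=\min_{\alpha,j}(h_j+1)/k_j$, with multiplicity $m$ equal to the maximal number of coordinates attaining this minimum in one chart. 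Because the integrand is nonnegative, the leading Laurent coefficients are positive and cannot cancel between charts. This positivity is the key point and needs care.

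\emph{Link to volume.} Write $V(\varepsilon)=\operatorname{vol}T_p(\varepsilon)$. Then $\zeta_{p,a}(z)=\int_0^\infty t^{-z}\,dV(t)$ is a Mellin–Stieltjes transform. Directly in the monomial charts, $V(\varepsilon)$ can be computed as a finite sum of integrals of the form $\int_{|u^k|\le\varepsilon}u^h\,du$. Each has an explicit asymptotic expansion in $\varepsilon^{\lambda'}(-\log\varepsilon)^{m'-1}$, with leading term governed by the same $\lambda$ and $m$; alternatively, invoke the cited \cite[Lemma 7.2]{Lin:tubes}. Hence the smallest pole and its order coincide with the exponents in $V(\varepsilon)\approx C\varepsilon^\lambda(-\log\varepsilon)^{m-1}$.

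I expect the main obstacle to be this last comparison, together with the non-cancellation of leading poles. Nonnegativity of $|p|^{-z}$ for real $z$ forces the coefficient of $(z-\lambda)^{-m}$ to be strictly positive. The volume side then requires a Tauberian step, or the explicit chart-wise computation, which I would try first.
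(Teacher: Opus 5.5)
Your outline is correct. It is the standard argument behind the results the paper cites: Atiyah for the resolution-based meromorphic continuation with poles among the $(h_j+1+n)/k_j$, and Lin for the link with $\operatorname{vol} T_p(\varepsilon)$ via $\zeta_{p,a}(z)=\int_0^\infty t^{-z}\,dV(t)$. The paper gives no proof of its own. Of your two fixes for the sharp cutoff, only the second (semianalytic $W$, with its boundary resolved together with $p$) gives the continuation of $\zeta_{p,a}$ itself; the smooth bump only preserves $(\lambda,m)$.

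Since the paper already takes $V(\varepsilon)\approx C\varepsilon^{\lambda}(-\log\varepsilon)^{m-1}$ as given, the final comparison needs no Tauberian step. The elementary Abelian estimate $\zeta_{p,a}(z)\sim C\lambda\,\Gamma(m)\,(\lambda-z)^{-m}$ as $z\uparrow\lambda$, together with absolute convergence for $\operatorname{Re} z<\lambda$, forces the smallest pole to be $\lambda$ with order exactly $m$. This also makes your chart-wise non-cancellation argument unnecessary.
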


We will now make precise the connection between the value of the RLCT and the asymptotics of calculation errors.

\begin{remark}\label{remark:asymptotics}
    If we are given two potential solution paths $g_1$ and $g_2$, this leads to two error functions $p_1 = f-g_1$ and $p_2 = f- g_2$. Denote $\rlct_a(p_1) = (\lambda_1,m_1)$ and $\rlct_a(p_2) = (\lambda_2,m_2)$. Note that the following are equivalent for small enough $\eta > 0$:
\begin{enumerate}
    \item $\varepsilon^{\lambda_1} (- \log \varepsilon)^{m_1-1} > \varepsilon^{\lambda_2} (- \log \varepsilon)^{m_2-1}$ for all $\varepsilon \in (0,\eta)$.
    \item $\lambda_1 < \lambda_2$, or $\lambda_1 = \lambda_2$ and $m_1 > m_2$.
\end{enumerate}
Based on this observation, we define a total ordering on pairs
 $(\lambda_1,m_1),(\lambda_2,m_2) \in \R^2$ via
 \begin{equation}\label{eq:order}
 (\lambda_1,m_1) < (\lambda_2,m_2) \text{ if and only if } (\lambda < \lambda' \text{ or } (\lambda = \lambda' \text{ and } m > m')).
 \end{equation}
 See Example~\ref{example:easy-log-res} below for simple examples that realize several different values for $(\lambda,m)$. For general formulas in the case where the zeros of $f-g$ form a union of affine linear spaces, that is, where $f-g$ is a product of linear polynomials, see~\cite{KostaWindisch}.

  We can now easily compare two potential solutions by their RLCTs.
     Indeed, let $g_1$ and $g_2$ be two potential solutions for an exercise $(a,f)$. In this case, we could say that $g_1$ is \emph{asymptotically better} than $g_2$ if $\rlct_a(f-g_1) < \rlct_a(f-g_2)$ with respect to the total ordering defined in (\ref{eq:order}).
\end{remark}

Throughout the manuscript, if $f_1,\ldots,f_r \in \R[x_1,\ldots,x_d]$ are polynomials with real coefficients, we will denote by $V(f_1,\ldots,f_r) = \{x \in \R^d \mid f_1(x) = \cdots = f_d(x) = 0\}$ the real common zeros of the $f_i$. For instance, $V(f-g)$ denotes the set of all elements of $\R^d$ on which $f$ and $g$ agree. When convenient, one might think of $V(f_1,\ldots,f_r)$ as carrying some extra structure, e.g., the structure of a real analytic space or of an algebraic variety/scheme.
 \begin{remark}\label{remark:worst-case}
     It can be shown that $0 <  \lambda \leq d/2$ and $m \in \{1,\ldots,d\}$, where $\rlct_a(f-g) = (\lambda, m)$ for given $f,g \in \R[x_1,\ldots,x_d]$. Moreover, if $V(f-g) = \{x \in \R^d \mid f(x) - g(x) = 0\}$ has codimension $1$ in $\R^d$ then $\lambda \leq 1$. Consequently, in this last case, the worst asymptotic behavior of $f-g$ arises when $\rlct_a(f-g) = (1,1)$.
 \end{remark}

 \section{Resolution of singularities for asymptotic error estimation}\label{sec:resolution} With $\rlct_a(f-g)$, we now have a good measure on a theoretical level for the error that we make by replacing the solution path $f$ with a potential solution path $g$. However, there is no obvious straightforward way to compute this pair. Neither directly computing the asymptotics of $\operatorname{vol} T_{f-g}(\varepsilon)$ nor analyzing the poles of $\zeta_{f-g,a}$ seems feasible. Algebraic / analytic geometry gives a way to deal with this problem.

 The idea is to recursively apply analytic transformations to the domain of $p = f-g$ that will, in the end, transform $p$ to a monomial. Computing the RLCT of a monomial is then easy, as we will see. In order to make all of this notationally convenient, we introduce a variant of the RLCT.

 Let $\phi: W \to \R$ be an analytic function. We define 
 \[
\rlct_a(p;\phi) 
 \]
to be the pair consisting of the smallest pole and its order of the meromorphic function $\C \to \C$ given by
\[
\int_W |p(w)|^{-z} |\phi(w)|dw
\]
all of whose poles can be shown to be positive rational numbers, where $W$ is a small enough open neighborhood of $a$ with compact closure, see above. Note that $\rlct_a(p) = \rlct_a(p;1)$, where $1$ denotes the constant function taking the value $ 1\in \R$ on all of $W$. There is an equivalent formulation of the definition of $\rlct_a(p;\phi)$ via volumes, just as for $\rlct_a(p)$, see~\cite{Lin:tubes}. It amounts to replacing the Lebesgue measure $dw$ by the measure $|\phi(w)|dw$ and then computing the volume of $T_p(\varepsilon)$.

\subsection{Blow-ups and isomorphisms}
The transformations we use in order to achieve our goal are of two kinds: blow-ups and isomorphisms. The basic idea of blow-ups is to smoothen singularities by ``stretching out'' the variety along these singularities. The general construction is slightly involved, but for our purpose in this paper, blow-ups along linear subspaces of $\R^d$ are sufficient. We will describe these now. The \emph{blow-up} of $\R^d$ along the linear subspace $V \subseteq \R^d$ defined by $x_1 = \cdots = x_r = 0$ with $r \in \{1,\ldots,d\}$ is a map $\rho: M \to \R^d$, where $M$ is an algebraic subvariety of a product of $\R^d$ with real projective space of dimension $r-1$.
As will turn out, the exact nature of $M$ is not particularly relevant for us, as long as we can describe $\rho$ locally on $M$. Indeed, $M$ can be covered by $r$ open charts that are analytically isomorphic to $\R^d$ and the restrictions of $\rho$ to these charts are
    \begin{align}\label{equation:blow-up}
        \rho_i: \R^d &\longrightarrow \R^d \nonumber\\
                (x_1,\ldots,x_d) &\longmapsto (x_1x_i,\ldots,x_{i-1}x_i,x_i,x_{i+1}x_i,\ldots,x_rx_i,x_{r+1},\ldots,x_d).
    \end{align}
The Jacobian determinant of $\rho_i$ can be easily computed and is equal to $x_i^{r-1}$.

\begin{example}\label{example:blow-up}
    Let $\rho$ be the blow-up of $\R^2$ along the origin defined by $x = y = 0$. The first chart is just the map
    \begin{align*}
        \rho_1: \R^2 &\to \R^2\\
        (x,y) &\mapsto (x,xy).
    \end{align*}
    It is an isomorphism on $\R^2\setminus\{x = 0\}$ and maps exactly the line $x = 0$ to the origin. Similarly, the $i$-th chart $\rho_i$ in (\ref{equation:blow-up}) of the blow-up of $\R^d$ along the subspace $x_1 = \ldots = x_r = 0$ pulls this subspace back to the hyperplane $x_i = 0$ and is an isomorphism away from this hyperplane.

\begin{figure}[H]
\centering
\begin{tikzpicture}
\node (A) at (0,0)
    {\includegraphics[width=0.35\textwidth]{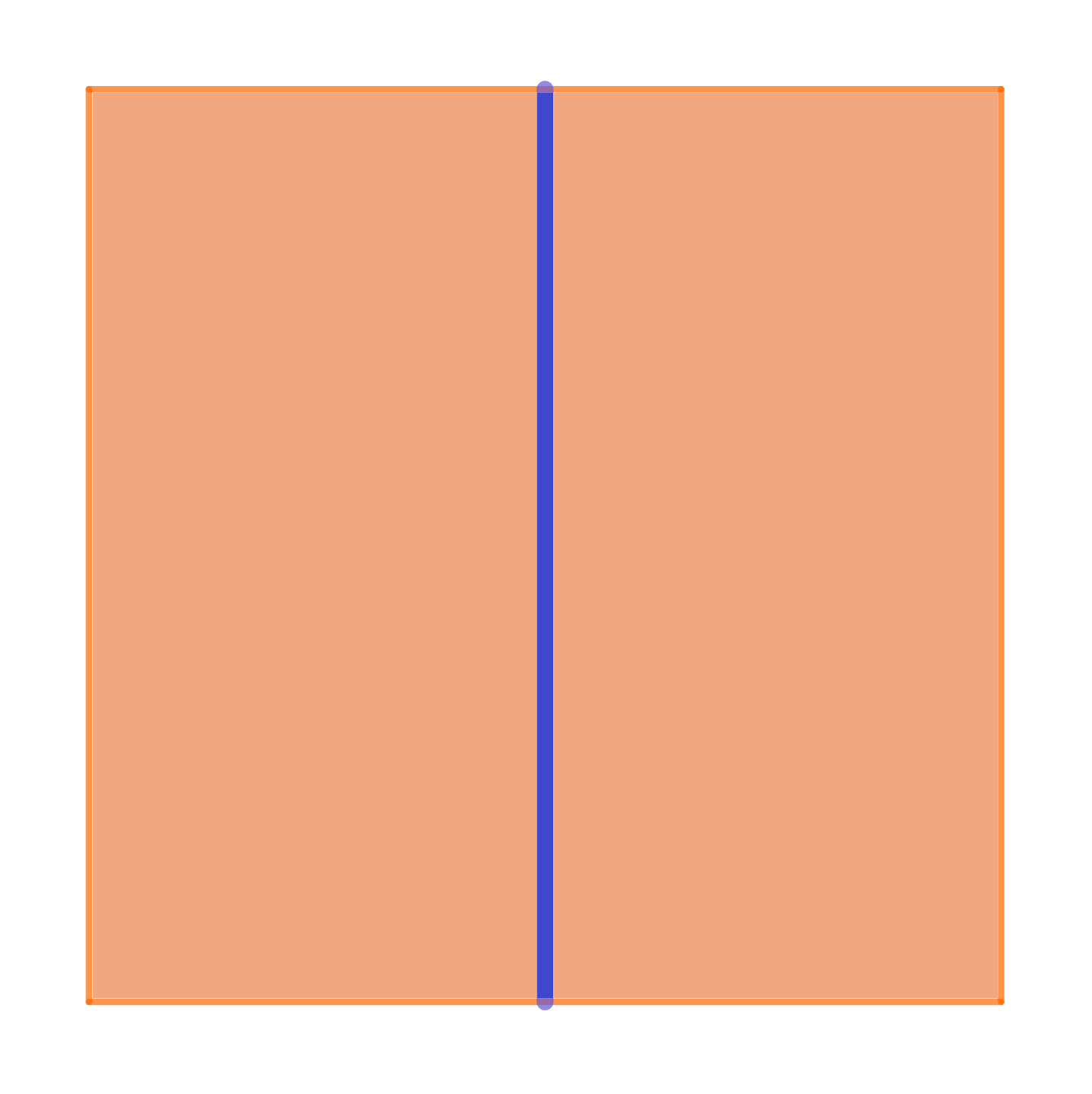}};
\node (B) at (8,0)
    {\includegraphics[width=0.35\textwidth]{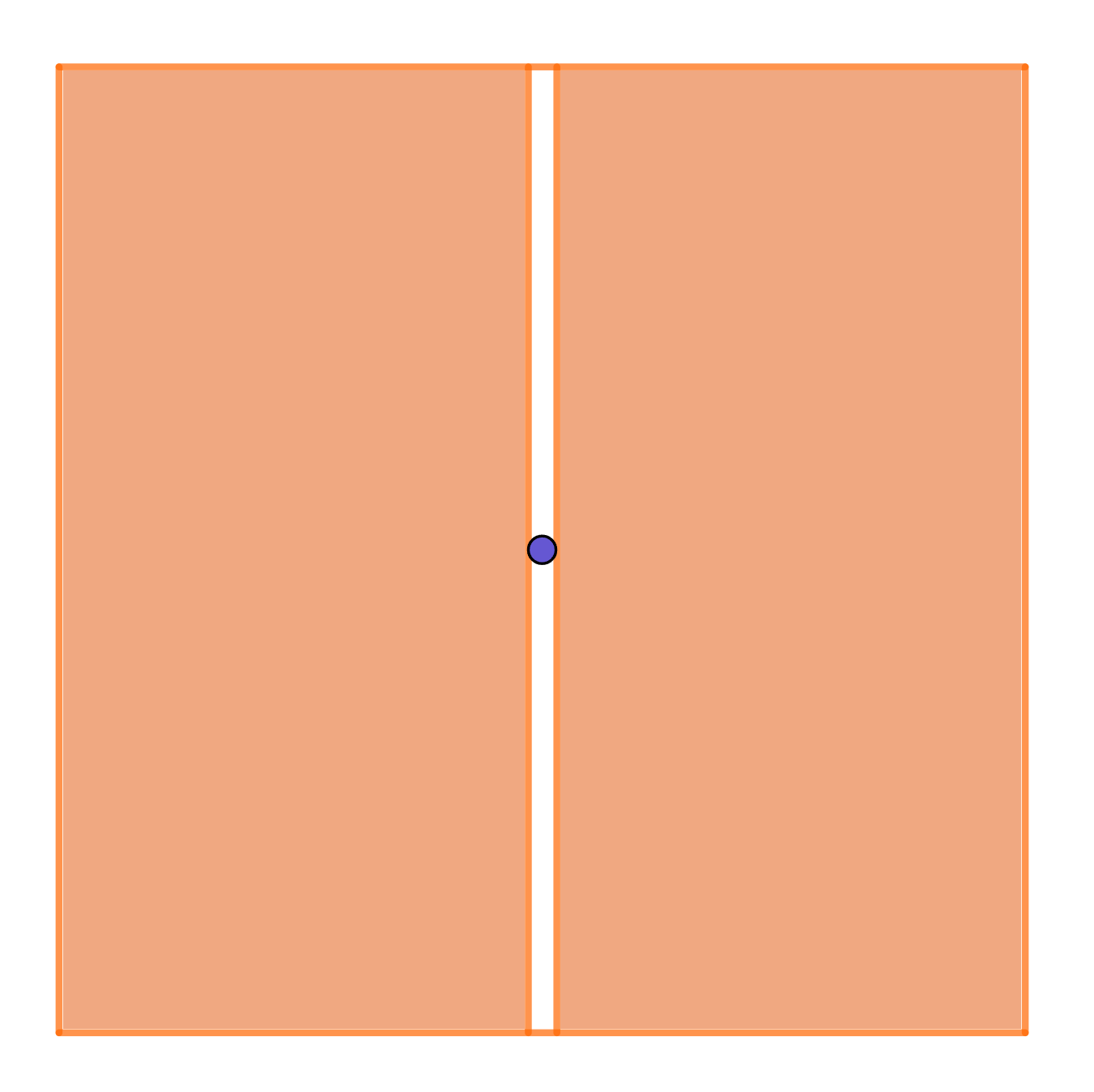}};

\draw[->, thick]
    (A.east) -- node[above] {$\rho_1$} (B.west);
\end{tikzpicture}
\caption{The first chart $\rho_1$ of the blow-up of $\R^2$ along the origin pulls this point back to the line $\{x=0\}$ and is an isomorphism away from this line.}
\label{fig:rho1}
\end{figure}
\end{example}

Now that we have blow-ups at hand, we can review a few useful facts from the literature on which our recursive method to compute $\rlct_a(p)$ will be based. The main idea is, that blow-ups will transform the function $p$ to a monomial and thereby not change the RLCT.

\begin{remark}\label{remark:facts}
    \begin{enumerate}
        \item \cite[Lemma 3.8]{lin:phd} Let $\chi: W \to \R$ be analytic and non-vanishing on all of $W$. Then \[\rlct_a(p;\chi\cdot \phi) = \rlct_a(p;\phi).\]
        \item \cite[Proposition 8]{Lin2017Ideal} Let $\rho_1,\ldots,\rho_r$ be the charts of a blow-up as described above. Then
        \[
        \rlct_a(p;\phi) = \min_{\substack{i \in [r] \\ y \in \rho_i^{-1}(a)}} \rlct_y(p \circ \rho_i; (\phi \circ \rho_i)\cdot x_i^{r-1}).
        \]
        \item \cite[Proposition 8]{Lin2017Ideal} Let $W' \subseteq \R^d$ and $\alpha: W' \to W$ be a real analytic isomorphism, that is, a bijective analytic map whose inverse is also analytic. Then
        \[
        \rlct_a(p;\phi) = \rlct_{\alpha^{-1}(a)}(p\circ \alpha; \phi \circ \alpha).
        \]
        \item The following is a combination of \cite[Theorem 7.1]{Lin:tubes} and \cite[Proposition~6]{Lin2017Ideal}. Let $a_1,\ldots,a_d,b_1,\ldots,b_d \in \Z_{\geq 0}$ and let $\mu$ be an analytic function that does not vanish on $W$. If we write
        $
        \rlct_0(\mu \cdot x_1^{a_1} \cdots x_d^{a_d}; x_1^{b_1} \cdots x_d^{b_d}) = (\lambda,m)
        $
        then
         \begin{align*}
             \lambda &= \min_{\substack{i \in [d] \\ a_i \neq 0 }} \frac{b_i +1}{a_i}, \\
             m &= \bigg\vert  \left\{ i \in [d] \ \bigg\vert \ a_i \neq 0,  \lambda = \frac{b_i +1}{a_i} \right\} \bigg\vert.
         \end{align*}
         Moreover, for all $a \in \R^d$, 
        \[
         \rlct_0(\mu \cdot x_1^{a_1} \cdots x_d^{a_d}; x_1^{b_d} \cdots x_d^{b_d}) \leq \rlct_a(\mu \cdot x_1^{a_1} \cdots x_d^{a_d}; x_1^{b_1} \cdots x_d^{b_d})
        \]
    \end{enumerate}
\end{remark}

Now that we have Remark~\ref{remark:facts}(1)-(4) at hand, we can make more precise what we meant by the recursive method for computing $\rlct_a(p;\phi)$. We start with $p$ that is defined on $W \subseteq \R^d$. We then apply an isomorphism $\alpha$ as in (3) that depends on $p$. This does not change the $\rlct$. We then do a blow-up along a well-chosen linear subspace which gives us chart maps $\rho_1,\ldots,\rho_r$. Now, by (2), renaming $p_i = p\circ\alpha \circ \rho_i$ and $\phi_i = (\alpha \circ \rho_i) \cdot x_i^{r-1}$, we are left with computing $\rlct_y(p_i; \phi_i) $ for all $i \in [r]$ and all $y \in (\alpha \circ \rho_i)^{-1}(a)$. We now can apply the same process to this new problem with $p = p_i$, $\phi = \phi_i$ and $a = y$ and we hope that, after finitely many steps, we arrive at a situation where both $p$ and $\phi$ are monomials as in (4). In Section~\ref{sec:helpful-results}, we will introduce further useful results on RLCTs and we will then illustrate in several instructive examples how the above process works.

The composition of all the isomorphisms and blow-ups that we use in order to make $p$ and $\phi$ monomials is called a \emph{log resolution of singularities} in the literature. As stated above, there are more general blow-up constructions than just the one along a \emph{center} that is a linear subspace. A famous result of Hironaka~\cite{Hironaka1964Resolution} on resolution of singularities states that, if we allow these general blow-ups, then a log resolution always exists. 

For some readers, it might be confusing that we call a map, that transforms $p$ locally to a monomial, a resolution of singularities. Indeed, what we usually mean by a resolution of singularities is a smooth variety that is isomorphic to the one defined by $p = 0$ on a large subset.\footnote{The notion of ``large'' depends on the context. It means usually either being the complement of a vanishing set of polynomials or of analytic functions.} It is easy to find such a resolution of singularities from a log resolution $f$ by removing from $\{p\circ f = 0\}$ the set on which $f$ is not bijective and then taking the closure. The converse is not true: For instance, in Example~\ref{example:easy-log-res}(4) below, a single blow-up yields a resolution of singularities for $p = x^2 - y^3 = 0$ on $\R^2$, but only a sequence of three blow-ups leads to a log resolution.

\subsection{Examples of RLCTs}\label{sec:helpful-results}

Before we calculate the asymptotics of $\varepsilon$-tubes around special varieties in $\R^2$, we review two useful results on isomorphic transformations of certain unions of hypersurfaces. Let $p_1,\ldots,p_r \in \R[x_1,\ldots,x_d]$, $a \in \R^d$, and suppose that each hypersurface $V(p_i)$ contains $a$ and is non-singular at $a$. We say that $V(p_1),\ldots,V(p_r)$ \textit{meet transversally} at $a$ if the intersection $T_a V(p_1) \cap \cdots \cap T_a V(p_r)$ of tangent spaces at this point has codimension $r$ in $\R^d$, see Figure~\ref{fig:trans} for illustration. In this case, we also say that $p = p_1\cdots p_r$ defines a divisor with \emph{simple normal crossings} on $\R^d$. Of course, if this is true, $r$ can be at most $d$. The following two lemmas are common knowledge but, as their proofs are easy and strengthen intuition, we summarize them here.

\begin{figure}[H]
    \centering
         \includegraphics[width=0.45\textwidth]{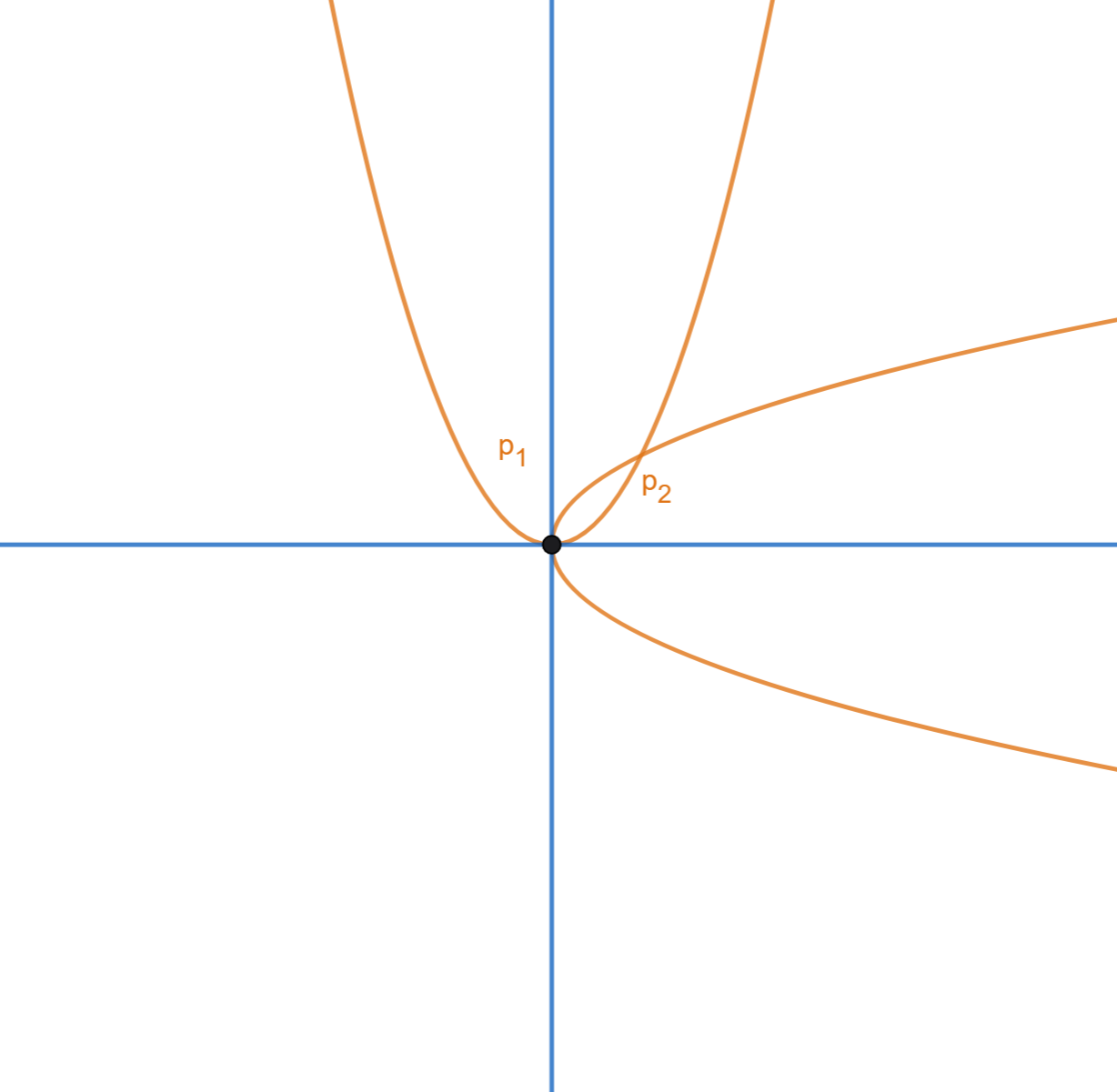} \includegraphics[width=0.45\textwidth]{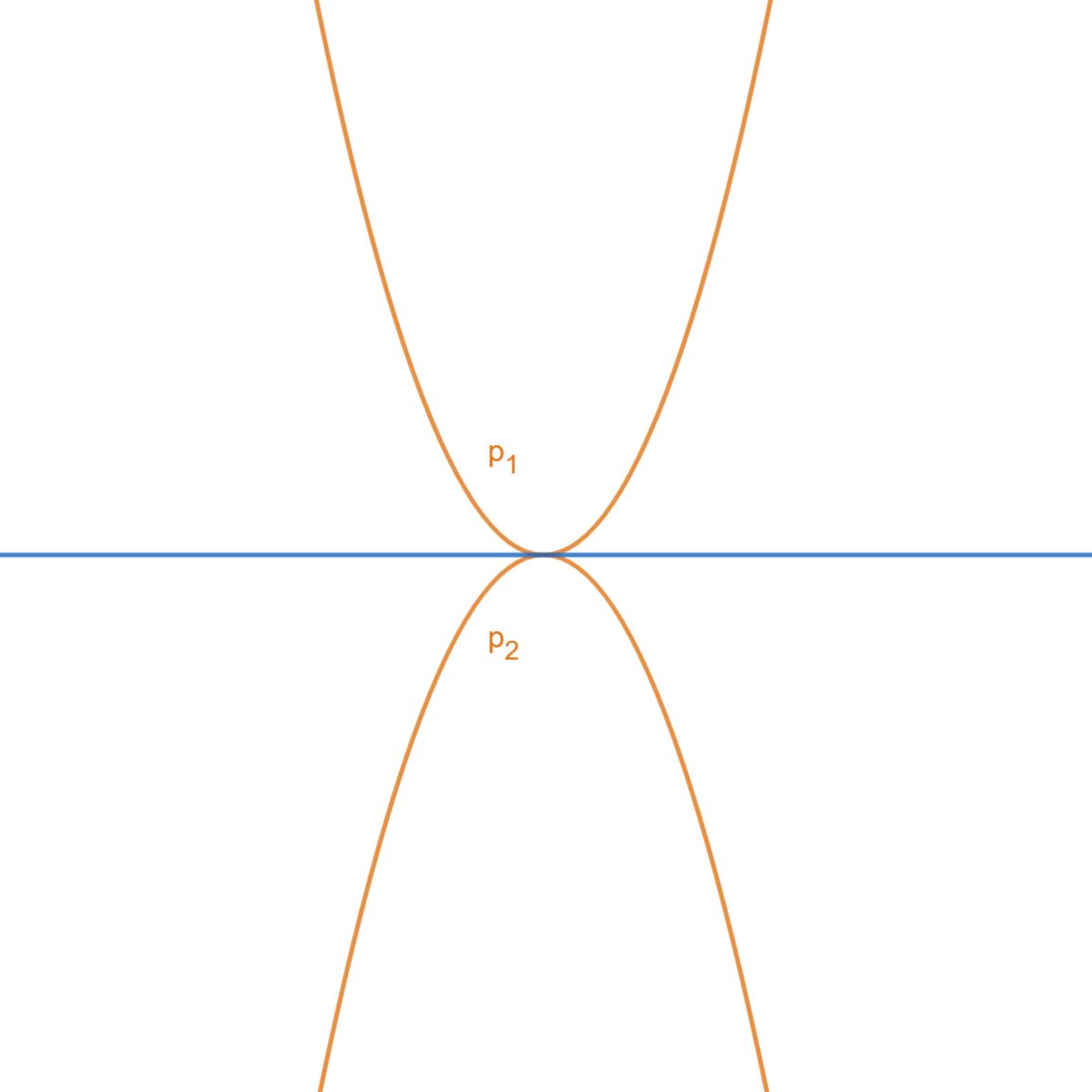}
    \caption{On the left, the two parabolas (orange) defined by $p_1 = y - x^2$ and $p_2 = y^2 - x$ meet transversally at the origin as their tangent spaces (blue) intersect in a point (black), which has codimension $2$. On the right, the two parabolas (orange) defined by $p_1 = y-x^2$ and $p_2 = y + x^2$ do not meet transversally at the origin as they have the same tangent space (blue).}
    \label{fig:trans}
\end{figure}

\begin{lemma}\label{lemma:transversal}
    Let $p_1,\ldots,p_r \in \R[x_1,\ldots,x_d]$, $a \in \R^d$, and suppose that each hypersurface $V(p_i)$ contains $a$, is non-singular at $a$, and that they meet transversally at $a$. Set $p = p_1^{e_1}\cdots p_r^{e_r}$ with $e_i \in \Z_{\geq 0}$. Then there exist  neighborhoods $U,V \subseteq \R^d$ of $a$ and $0$, respectively, and a real analytic isomorphism $\sigma: V \to U$ with $\sigma(0) = a$ and $p \circ \sigma = x_1^{e_1}\cdots x_r^{e_r}$, where the $x_i$ are the first $r$ coordinate functions on $\R^d$.
\end{lemma}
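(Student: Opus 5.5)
The plan is to build the isomorphism from the polynomials $p_1,\ldots,p_r$ themselves, completed to a full coordinate system, and then invoke the real analytic inverse function theorem. Since $V(p_i)$ is non-singular at $a$, the gradient $\nabla p_i(a)$ is nonzero, and the tangent space $T_aV(p_i)$ is its orthogonal complement. Transversality means that $T_aV(p_1)\cap\cdots\cap T_aV(p_r)$ has codimension $r$. This is equivalent to the gradients $\nabla p_1(a),\ldots,\nabla p_r(a)$ being linearly independent, because the intersection is the orthogonal complement of their span.

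First I complete these gradients to a basis of $\R^d$ by choosing vectors $v_{r+1},\ldots,v_d$. I then define $\tau:\R^d\to\R^d$ by
\[
\tau(w)=\bigl(p_1(w),\ldots,p_r(w),\langle v_{r+1},w-a\rangle,\ldots,\langle v_d,w-a\rangle\bigr).
\]
This map is polynomial, hence real analytic. It satisfies $\tau(a)=0$, because each $p_i(a)=0$ as $a\in V(p_i)$. Its Jacobian matrix at $a$ has rows $\nabla p_1(a),\ldots,\nabla p_r(a),v_{r+1},\ldots,v_d$, so it is invertible.

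Second, I apply the real analytic inverse function theorem. It gives neighborhoods $U$ of $a$ and $V$ of $0$ such that $\tau|_U:U\to V$ is bijective with a real analytic inverse $\sigma:=(\tau|_U)^{-1}:V\to U$, and $\sigma(0)=a$. This is the only nontrivial input. Analyticity of the inverse, and not merely smoothness, is what is needed here, and it is standard. One can see it, for instance, by complexifying and using the holomorphic inverse function theorem, or by solving for the power series coefficients and checking convergence by majorants.

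Finally, for $x\in V$ we have $\tau(\sigma(x))=x$. Reading off the first $r$ components gives $p_i(\sigma(x))=x_i$ for $i\le r$. Therefore
\[
p\circ\sigma=\prod_i (p_i\circ\sigma)^{e_i}=x_1^{e_1}\cdots x_r^{e_r}.
\]
I do not expect a real obstacle. The only points to check carefully are that transversality, as phrased via tangent spaces, translates exactly into linear independence of the gradients, and that the cited version of the inverse function theorem gives an analytic inverse.
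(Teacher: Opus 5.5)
Your proof is correct and follows the paper's argument: you complete $(p_1,\ldots,p_r)$ by affine-linear functions $\ell_i$ to a map with invertible Jacobian at $a$, take its analytic local inverse $\sigma$ via the real analytic inverse function theorem, and read off $p_i\circ\sigma = x_i$. You also make explicit two steps the paper leaves implicit: transversality amounts to linear independence of the gradients, with non-singularity read as $\nabla p_i(a)\neq 0$, and $p\circ\sigma=\prod_i (p_i\circ\sigma)^{e_i}$.
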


\begin{proof}
    Consider $F = (p_1,\ldots,p_r): \R^d \to \R^r$. Since the $p_i$ are non-singular and meet transversally at $a$, the rank of the Jacobian matrix $\jac F$ at $a$, which equals the codimension of $T_a V(p_1) \cap \cdots \cap T_a V(p_r)$, is exactly $r$. Complete the rows of $\jac F$ at $a$ to a basis of $\R^d$ by adding vectors $c_{r+1},\ldots, c_d \in \R^d$ and define linear polynomials $\ell_i = \sum_{j = 1}^d c_{ij} (x_j - a_j)$ for $i \in \{r+1,\ldots,d\}$. Now, the map $\widehat{F} = (p_1,\ldots,p_r,\ell_{r+1},\ldots,\ell_d): \R^d \to \R^d$ has Jacobian matrix of full rank at $a$. Consequently, the real analytic inverse function theorem~\cite[Theorem 1.8.1]{KrantzParks2002} says that there are neighborhoods $U,V \subseteq \R^d$ of $a$ and $0$, respectively, such that $\widehat{F}: U \to V$ has an analytic inverse $\sigma: V \to U$. It follows that $\sigma(0) = a$ as $\widehat{F}(a) = 0$. Moreover, $\widehat{F} \circ \sigma = \operatorname{id}_V = (x_1,\ldots,x_d)$ and the $i$-th coordinate of $\widehat{F}$ is $p_i$, so $p_i \circ \sigma = x_i$.
\end{proof}

\begin{lemma}\label{lemma:regular}
    Let $p \in \R[x_1,\ldots,x_d]$ define a hypersurface in $\R^d$ that is non-singular at $a \in \R^d$. Then $\rlct_a(p) = (1,1)$.
\end{lemma}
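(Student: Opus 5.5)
The plan is to reduce to a coordinate function using Lemma~\ref{lemma:transversal} and then read off the RLCT from the monomial formula in Remark~\ref{remark:facts}(4). The statement implicitly assumes $p(a)=0$, since $a$ lies on the hypersurface $V(p)$ and non-singularity at $a$ presupposes this. If instead $p(a)\neq 0$, the convention is that no pole is attained near $a$, and this case is excluded here.

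\emph{Step 1.} Apply Lemma~\ref{lemma:transversal} with $r=1$, $p_1=p$, $e_1=1$. A single non-singular hypersurface trivially meets transversally at $a$: its tangent space has codimension $1$ because $\nabla p(a)\neq 0$. This gives neighborhoods $U\ni a$ and $V\ni 0$ and a real analytic isomorphism $\sigma: V\to U$ with $\sigma(0)=a$ and $p\circ\sigma = x_1$.

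\emph{Step 2.} By Remark~\ref{remark:facts}(3), applied with $\phi=1$ and $\alpha=\sigma$, we get $\rlct_a(p)=\rlct_a(p;1)=\rlct_0(x_1;1\circ\sigma)=\rlct_0(x_1;1)$. Here one shrinks $W$ to lie inside $U$, which is allowed because the RLCT is independent of the choice of a small enough $W$.

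\emph{Step 3.} Apply Remark~\ref{remark:facts}(4) with $\mu=1$, $a_1=1$, $a_2=\cdots=a_d=0$ and all $b_i=0$. The only index with $a_i\neq 0$ is $i=1$. Hence $\lambda=(b_1+1)/a_1=1$, and the multiplicity is $m=|\{1\}|=1$. Thus $\rlct_a(p)=(1,1)$.

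I expect no real obstacle. The only points needing care are bookkeeping ones. First, the change of variables in Remark~\ref{remark:facts}(3) formally carries the Jacobian factor; in the form stated in the paper, that factor is absorbed by Remark~\ref{remark:facts}(1), since $|\det \jac\sigma|$ is analytic and non-vanishing near $0$. Second, the neighborhoods must be shrunk consistently. As a sanity check, one can also compute directly: $\int_{(-\delta,\delta)^d}|x_1|^{-z}\,dx = c\int_{-\delta}^{\delta}|x_1|^{-z}\,dx_1$, which has its first pole at $z=1$, and this pole is simple.
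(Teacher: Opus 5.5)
Your proposal is correct and follows the paper's proof. The paper also applies Lemma~\ref{lemma:transversal} with $r=1$ to get $p\circ\sigma = x_1$ and then reads off $(1,1)$ from Remark~\ref{remark:facts}(4); your explicit use of Remark~\ref{remark:facts}(3) and (1) to remove the non-vanishing Jacobian factor $|\det\jac\sigma|$ just fills in a step the paper leaves implicit.
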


\begin{proof}
    By Lemma~\ref{lemma:transversal}, there exist a neighborhood $U \subseteq \R^d$ of $a$ and a real analytic isomorphism $\sigma: \R^d \to U$ with $\sigma(0) = a$ and $p \circ \sigma = x_1$. The result now follows immediately from Remark~\ref{remark:facts}(4).
\end{proof}

\begin{example}\label{example:easy-log-res}
    We illustrate the recursive process of applying isomorphisms and blow-ups in several easy examples.
    \begin{enumerate}
        \item The first example is just the line in $\R^2$ defined by the vanishing of the function $p(x,y) = y$. Here, we do not even need to start the recursive progress described above. Instead, we can directly use Remark~\ref{remark:facts}(4) on $\rlct_a(y)$ which gives $(1,1)$ for every $a$ on this line.
         \begin{figure}[H]
    \centering
    \includegraphics[width=0.5\textwidth]{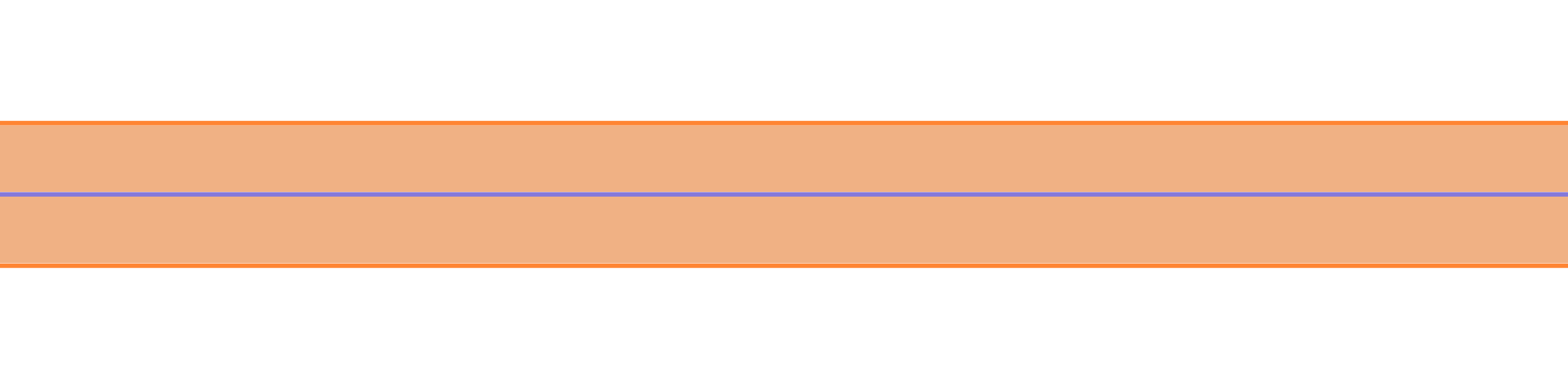}
    \caption{The tube $T_p(\varepsilon)$ (orange) around the variety given by $p(x,y)= y = 0$ (purple). As $\varepsilon \to 0$ the area of the tube is asymptotically $C\cdot \varepsilon$.}
    \label{fig:line}
\end{figure}
        \item The ``freshman's dream'' is the wrong believe that $(x+y)^2$ would equal $x^2 + y^2$ for all pairs of real numbers. The corresponding polynomial of differences is $p(x,y) = (x+y)^2 - (x^2 +y^2) = 2xy$. Again, up to a non-vanishing (or even constant) function, this is a product of variables. So we can compute $\rlct_0(2xy) = (1,2)$ using Remark~\ref{remark:facts}(4). We will study this in detail for a growing number of variables in Section~\ref{sec:freshman}.
        \begin{figure}[H]
    \centering
    \includegraphics[width=0.5\textwidth]{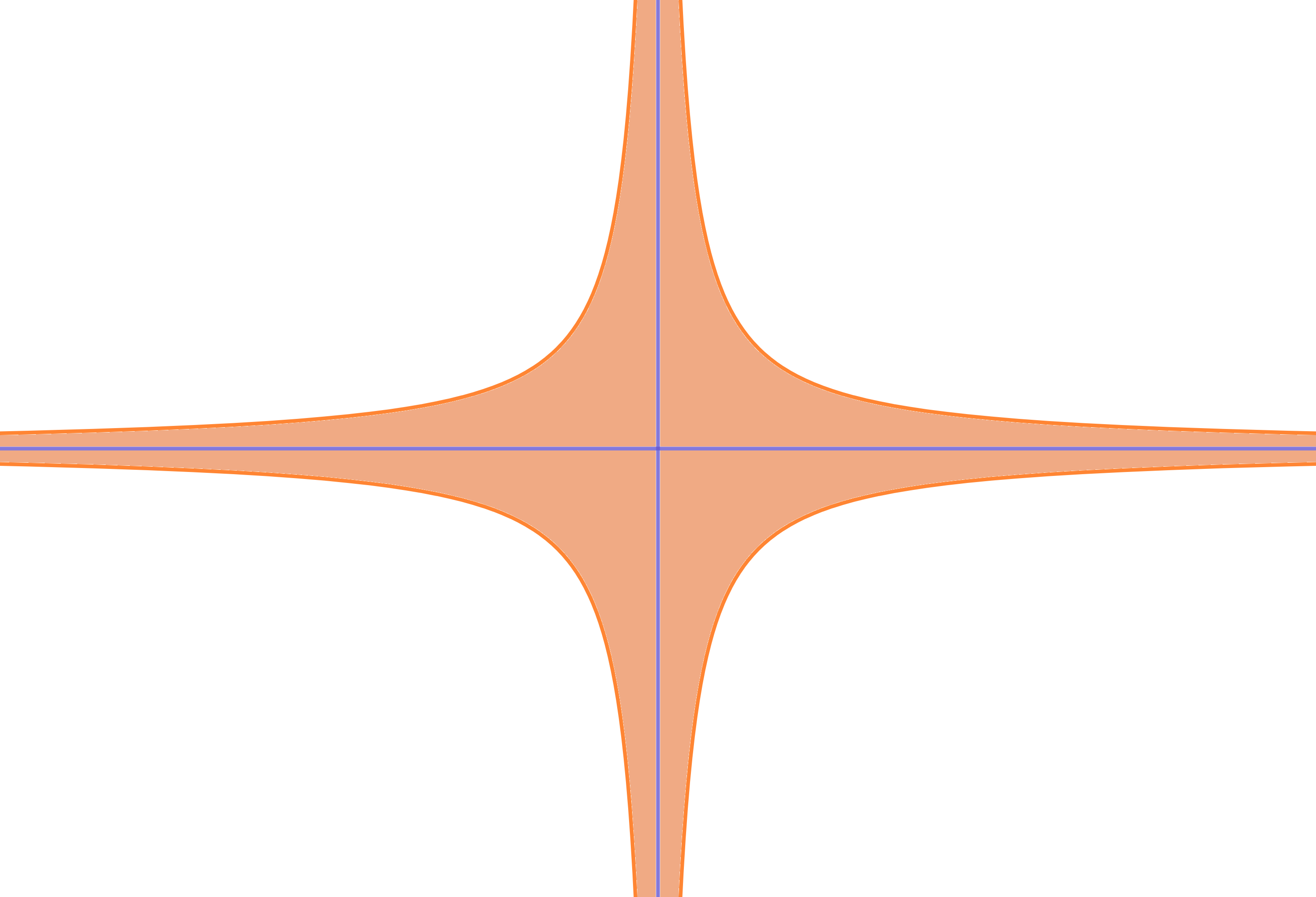}
    \caption{The tube $T_p(\varepsilon)$ (orange) around the variety given by $p(x,y)=xy = 0$ (purple). As $\varepsilon \to 0$ the area of the tube is asymptotically $C\cdot \varepsilon(-\log \varepsilon)$.}
    \label{fig:xy}
\end{figure}

        \item The variation of the ``freshman's dream'' complementary to Section~\ref{sec:freshman} is the one where, instead of the number of variables, the exponent is raised.  That is, one compares $(x + y)^3$ against $x^3 + y^3$ whose difference is, after division by $3$, equal to $p(x,y) = xy(x+y)$. This polynomial has three irreducible factors whose corresponding hypersurfaces meet in the origin of $\R^2$. So they do not meet transversally at this point. We will therefore need to start our recursive method of blowing up to study $\rlct_0(p)$.
             \begin{figure}[H]
    \centering
         \includegraphics[width=0.6\textwidth]{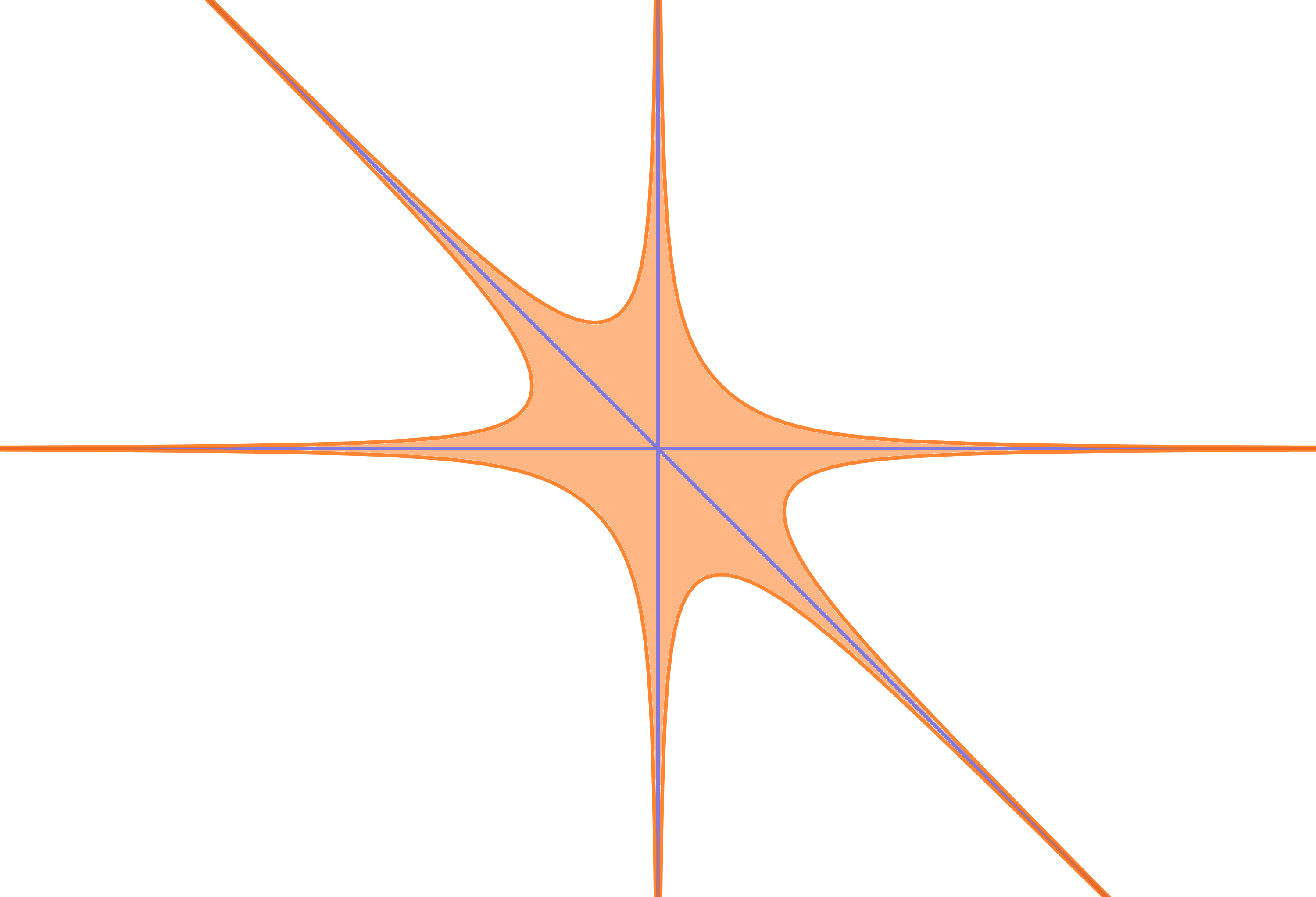}
    \caption{The tube $T_p(\varepsilon)$ (orange) around the variety given by $p(x,y)= xy(x+y) = 0$ (purple). As $\varepsilon \to 0$ the area of the tube is asymptotically $C\cdot \varepsilon^{2/3}$.}
    \label{fig:xy(x+y)}
\end{figure}

    The first isomorphism $\sigma$ is just the identity. Next, we apply the blow-up of $\R^2$ at the origin. Since $p$ is symmetric in $x$ and $y$, we just have to consider one chart, say $\rho_1(x,y) = (x,xy)$, which gives $p \circ \rho_1 = x^3y(1+y)$ and has Jacobian determinant $x$. A point $(x^*,y^*)$ lies in $\rho_1^{-1}(0)$ if and only if $x^* = 0$. In small neighborhoods of points $(0,y^*)$ with $y^* \notin \{0,-1\}$, the two factors $y$ and $1+y$ do not vanish. Hence, Remark~\ref{remark:facts}(4) gives that the first component of $\rlct_{(0,y^*)}(x^3y(1+y);x)$ is
    $(1+1)/3 = 2/3$.
    This value just appears once as a minimum among the factors, so the second component of the RLCT is $1$. The two points $(0,-1)$ and $(0,0)$ behave the same up to a linear isomorphism. So, we only consider $\rlct_0(x^3y(1+y);x)$ whose first component is computed by
    \[
    \min \left\{ \frac{1+1}{3}, \frac{0+1}{1}  \right\} = \frac{2}{3}.
    \]
    So, minimizing over all points in $\rho_1^{-1}(0)$ by using Remark~\ref{remark:facts}(2), we get \[\rlct_0(xy(x+y))= (2/3,1).\]

        \item We consider the cusp $p(x,y) = x^2 - y^3$ and will prove that $\rlct_0(p) = (5/6,1)$. A first blow-up will give a resolution of singularities in the sense that it transforms the variety given by $x^2 - y^3 = 0$ birationally to a non-singular variety. But only after two further blow-ups, we will reach a log resolution. We start with the first blow-up at the origin. \\
        \textbf{$x$-chart.} We compute $p \circ \rho_1 = x^2(1 - xy^3)$ with Jacobian determinant of the blow-up chart equal to $x$. The second factor does not vanish on the preimage of the origin. Therefore, we can disregard it by Remark~\ref{remark:facts}(4) and just compute $\rlct_0(x^2;x) = (1,1)$.\\
        \textbf{$y$-chart.} Here, we see that $p \circ \rho_2 = y^2(x^2 - y)$
        whose zero locus is the parabola given by $y = x^2$ together with the line tangent to it in $0$. So, these do not meet transversally and we cannot use Lemma~\ref{lemma:transversal} together with Remark~\ref{remark:facts}(4). Another blow-up will help us out. We have to take into account the Jacobian determinant $y$ from the first blow-up. So, we have the new function $\tilde{p}(x,y) = y^2(x^2 - y)$ and apply the blow-up at the origin. 

    The $y$-chart gives $y^3(x^2y - 1)$ with total Jacobian determinant $y^2$. The second factor does not vanish at points in the preimage of the origin under the two blow-up maps, so we compute $\rlct_0(y^3;y^2) = (1,1)$.

    The $x$-chart gives $x^3y^2(x-y)$ with total Jacobian determinant $x^2y$. This is a union of three lines meeting at the origin and, hence, still not isomorphic to a monomial. A further blow-up gives $x$-chart $x^6y^2(1-y)$ with Jacobian determinant $x^4y$, which has minimal RLCT $(5/6,1)$ at the origin. The $y$-chart is $x^3y^6(x-1)$ with Jacobian determinant $x^2y^4$. This has RLCT $(5/6,1)$.
    So the minimum over all charts arises with $(5/6,1)$ as claimed. Note that, with this, the RLCT can distinguish plane curve singularities, compare the node in (2) above.
    
        \begin{figure}[H]
    \centering
    \includegraphics[width=0.8\textwidth]{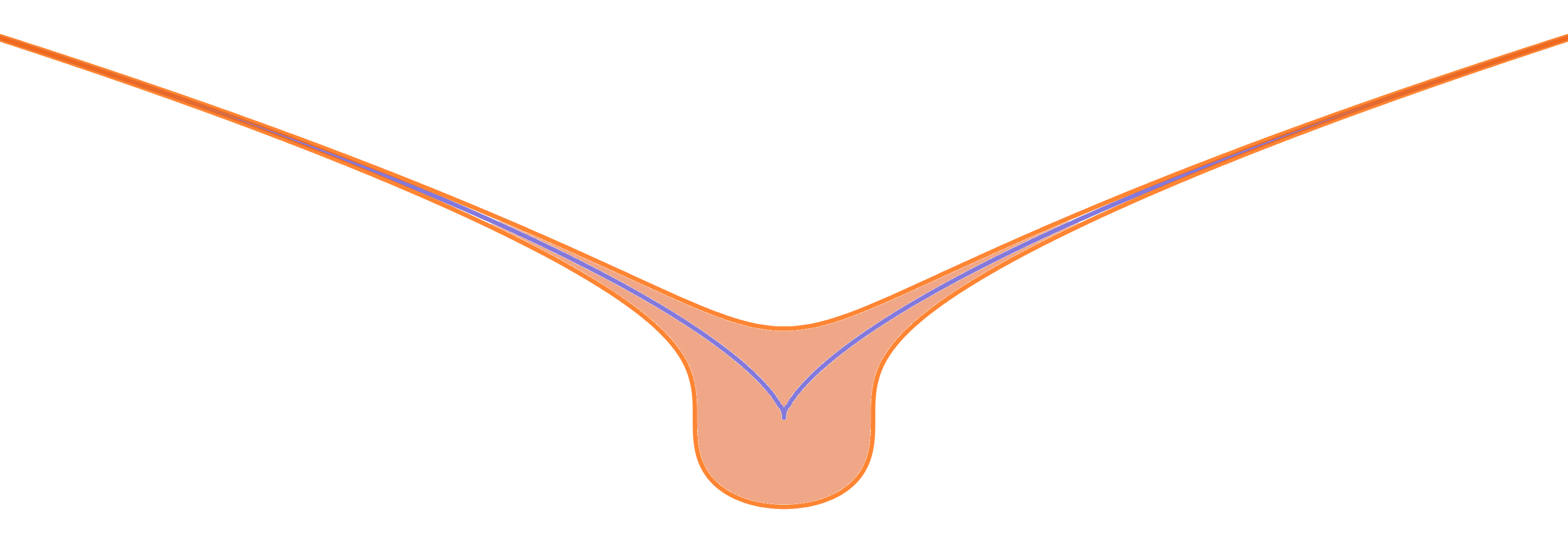}
    \caption{The tube $T_p(\varepsilon)$ (orange) around the variety given by $p(x,y)= x^2 - y^3 = 0$ (purple). As $\varepsilon \to 0$ the area of the tube is asymptotically $C\cdot \varepsilon^{5/6}$.}
    \label{fig:cusp}
\end{figure}

\item In a last example, we want to illustrate two things: that the base field for the log resolution matters and that, over $\R$, the first entry of the RLCT can get as large as $d/2$ for a polynomial $p \in \R[x_1,\ldots,x_d]$. We start with a low-dimensional example.

Let $p(x,y) = x^2 + y^2$. Over $\C$ this factors as $(x+iy)(x-iy)$ and so, up to a linear isomorphism, is the same as $xy$ from (2) which has RLCT $(1,2)$. Over $\R$, the polynomial is irreducible and its zero set consists just of one point, the origin. So it has codimension $2$ although over $\C$, the polynomial defines the union of two lines and has, hence, codimension $1$. A blow-up at the origin gives two symmetric charts with pullback of $p$ given by $x^2(1+y^2)$ and Jacobian determinant $x$. Since $1+y^2$ does not vanish over $\R$, this gives $\rlct_0(p) = \rlct_0(x^2;x) = (1,1)$.

Now, we go to a higher number of variables. Consider $p = x_1^2 + \cdots + x_d^2$. The blow-up at the origin gives the pullback $x_1^2(1 + x_2^2 + \cdots + x_d^2)$ with Jacobian determinant $x_1^{d-1}$. Again the second factor of the pullback does not vanish over $\R$, so we compute $\rlct_0(p) = \rlct_0(x_1^2;x_1^{d-1}) = (d/2,1)$.
    \end{enumerate}
\end{example}

\section{Log resolutions for the freshman's dream variety}\label{sec:freshman}

The topic of this section is the asymptotic estimation of the error that is made by supposing that $(x_1 + \cdots + x_n)^2$ is equal to $x_1^2 + \cdots + x_n^2$. We show that the error is, except for the case $n = 2$, as high as possible, compare Remark~\ref{remark:worst-case}.

\begin{theorem}\label{theorem:freshman}
    Let $n \geq 2$ and 
    \[
    f_n = (x_1 + \cdots + x_n)^2 - (x_1^2 + \cdots + x_n^2) = \sum_{\substack{i,j = 1 \\ i < j}}^n 2x_ix_j.
    \]
    Moreover, let $a \in \R^n$ such that $f_n(a) = 0$.
    Then 
    \[
    \rlct_a(f_n)=
    \begin{cases}
        (1,2) \text{ if } a = 0 \text{ and } n = 2,\\
        (1,1) \text{ else.}
    \end{cases}
    \]
\end{theorem}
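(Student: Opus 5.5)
The plan is to split according to whether $a$ is a singular point of the hypersurface $V(f_n)$ or not. At singular points I would simplify $f_n$ by a linear change of coordinates and then do a single blow-up at the origin.

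\textbf{Smooth points.} The partial derivatives are $\partial f_n/\partial x_i = 2(s - x_i)$, where $s = x_1+\cdots+x_n$. A singular point must therefore satisfy $x_i = s$ for all $i$. Then all coordinates equal a common value $c$, and $nc = s = c$ forces $c=0$ because $n \ge 2$. So the only singular point of $V(f_n)$ is the origin. For every $a \neq 0$ with $f_n(a)=0$, Lemma~\ref{lemma:regular} gives $\rlct_a(f_n) = (1,1)$.

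\textbf{Diagonalizing at the origin.} Here $f_n$ is the quadratic form with symmetric matrix $J - I$, where $J$ is the all-ones matrix. Its eigenvalues are $n-1$, with eigenvector $(1,\ldots,1)$, and $-1$ with multiplicity $n-1$. An orthogonal change of coordinates, which is a linear and hence analytic isomorphism fixing $0$, turns $f_n$ into $(n-1)y_1^2 - (y_2^2+\cdots+y_n^2)$; this does not change the RLCT by Remark~\ref{remark:facts}(3). For $n=2$ this is $y_1^2 - y_2^2 = (y_1-y_2)(y_1+y_2)$, a product of two transversal lines. Lemma~\ref{lemma:transversal} together with Remark~\ref{remark:facts}(4) then gives $(1,2)$, as in Example~\ref{example:easy-log-res}(2).

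\textbf{The case $n \ge 3$ at the origin.} I blow up the origin and examine the charts, using Remark~\ref{remark:facts}(2).
\begin{enumerate}
\item In the $y_1$-chart the pullback is $y_1^2\,u$ with $u = (n-1) - (y_2^2+\cdots+y_n^2)$, and the Jacobian is $y_1^{n-1}$.
\item In the $y_j$-chart with $j\ge 2$ the pullback is $y_j^2\,v$ with $v = (n-1)y_1^2 - 1 - \sum_{k\ne 1,j} y_k^2$, and the Jacobian is $y_j^{n-1}$.
\end{enumerate}
At points of the exceptional divisor $\{y_i = 0\}$ where the second factor does not vanish, Remark~\ref{remark:facts}(1),(4) give $\rlct = (n/2, 1)$.

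At points where the second factor vanishes, I check that its zero set is nonsingular there and meets $\{y_i=0\}$ transversally:
\begin{itemize}
\item The gradient of $u$ is $-2(0,y_2,\ldots,y_n)$. It is nonzero on $u=0$, since $u=0$ forces $\sum_{k\ge 2} y_k^2 = n-1 \neq 0$.
\item The gradient of $v$ has $y_1$-component $2(n-1)y_1$, which is nonzero on $v=0$, since there $(n-1)y_1^2 = 1 + \sum_{k\ne 1,j} y_k^2 \ge 1$.
\end{itemize}
In both cases the gradient has zero component in the direction of the blown-up coordinate $y_i$. It is therefore linearly independent from $dy_i$, which gives transversality. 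Lemma~\ref{lemma:transversal} then transforms the pullback locally into $x_1^2 x_2$ with Jacobian factor $x_1^{n-1}$ times a unit. Remark~\ref{remark:facts}(4) yields
\[
\lambda = \min\{n/2,\ 1/1\} = 1,
\]
attained once because $n/2 > 1$ for $n\ge 3$, so $m=1$. Minimizing over all charts and points gives $(1,1)$. This is consistent with the bound of Remark~\ref{remark:worst-case}.

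\textbf{Main obstacle.} The main care point is that applying Lemma~\ref{lemma:transversal} requires moving the Jacobian factor along with the pullback. The isomorphism $\sigma$ sends $y_i^{n-1}$ to $x_1^{n-1}$ times a non-vanishing function only if the first new coordinate is $y_i$ itself. This holds by the construction in that lemma, because $y_i$ is one of the transversal factors. Any remaining unit can then be absorbed by Remark~\ref{remark:facts}(1).
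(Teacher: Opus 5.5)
Your proof is correct and follows the paper's strategy: the origin is the only singular point, Lemma~\ref{lemma:regular} handles every other point, and a single blow-up of the origin yields a strict transform that is smooth and transversal to the exceptional divisor, so everything reduces to $\rlct_0(x_1^2x_2;x_1^{n-1})$ via Lemma~\ref{lemma:transversal} and Remark~\ref{remark:facts}(4). The only difference is your preliminary orthogonal diagonalization to $(n-1)y_1^2-\sum_{k\ge 2}y_k^2$. It makes the smoothness check for the strict transform a one-line positivity argument, but it breaks the full permutation symmetry, so you need two chart types and a separate $n=2$ argument. The paper instead works with $f_n$ directly, uses only the $x_1$-chart, checks smoothness of $g_n$ by solving a small linear system, and gets $n=2$ uniformly as $2x_1^2x_2$.
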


\begin{figure}[H]
    \centering
         \includegraphics[width=0.45\textwidth]{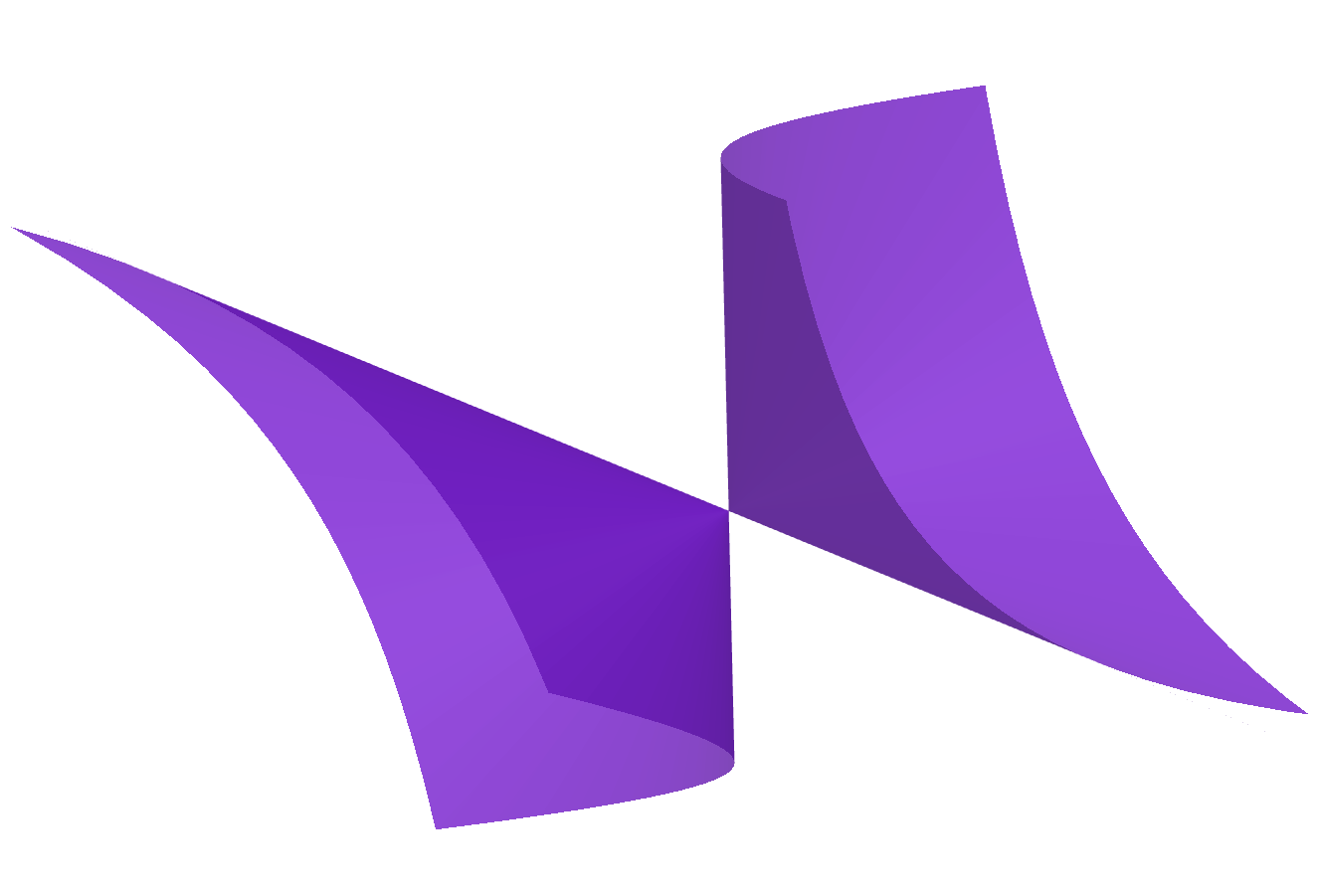} \includegraphics[width=0.45\textwidth]{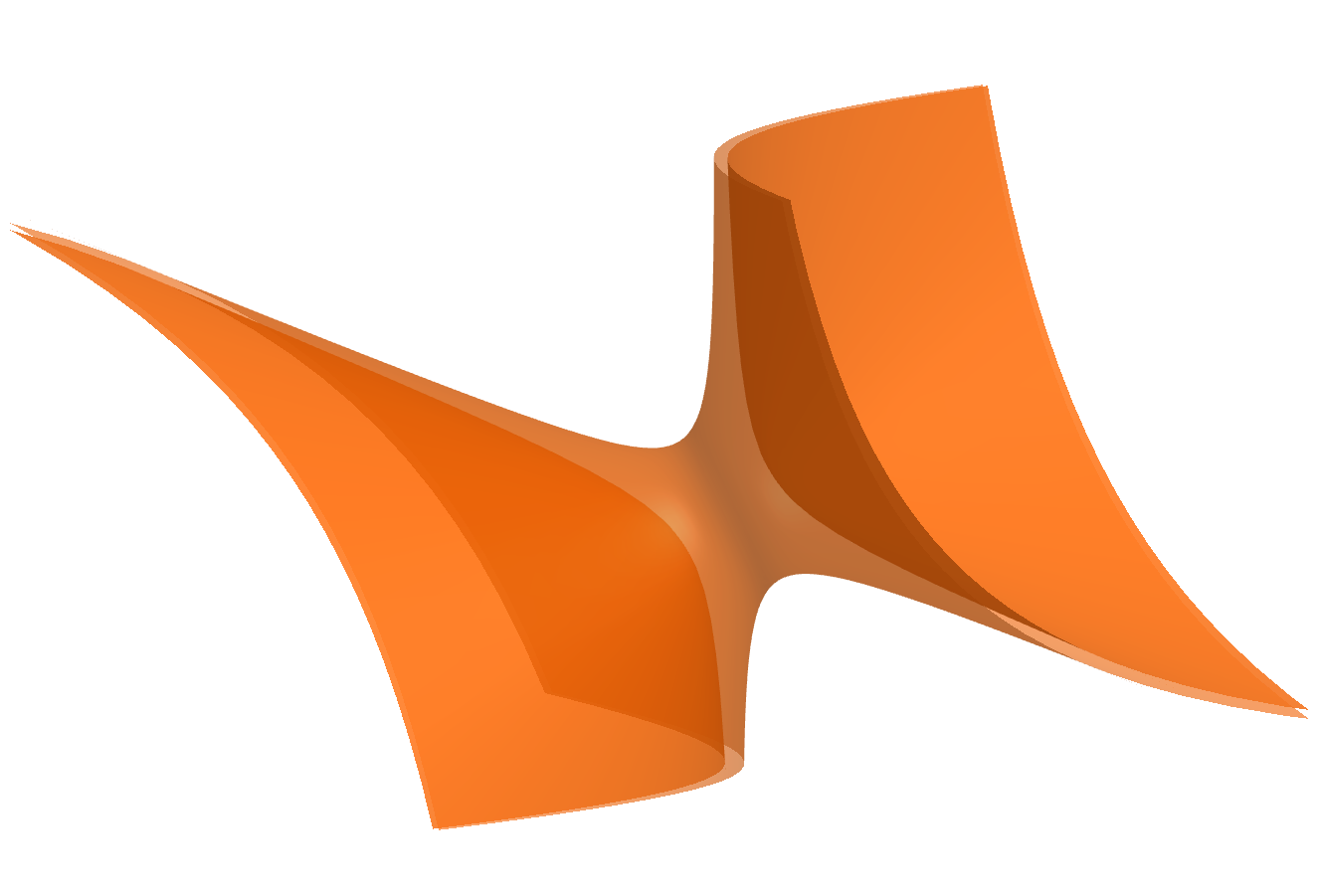}
    \caption{The cone (left) is the freshman's dream variety in the case $n = 3$ given by $f_3 = xy + yz + xz = 0$ with its $\varepsilon$-tube (right). Theorem~\ref{theorem:freshman} shows that the volume of the tube goes to $0$ like $C \cdot \varepsilon$ which is the worst possible behavior for a 2-dimensional space of points satisfying $f_3(x,y,z) = 0$ in $\R^3$.}
    \label{fig:cone}
\end{figure}

\begin{proof}
    We first investigate in which real points the variety defined by $f_n$ is non-singular. For $i \in \{1,\ldots,n\}$, the $i$-th entry (up to a scaling factor of $2$) of the Jacobian matrix of $f_n$, that is, of its gradient is
    \[
    J_i = \sum_{\substack{j = 1\\ j \neq i}}^n x_j.
    \]
    The singularities of the variety defined by $f_n$ are exactly the common zeros of $f_n$ with the $J_i$. The common zeros of the $J_i$ are exactly the points in the kernel of the matrix
    \begin{equation}\label{equation:matrix}
    \begin{bmatrix}
        0 & 1 & 1 & \cdots & 1 \\
        1 & 0 & 1 & \cdots & 1 \\
        1 & 1 & 0 & \cdots & 1 \\
        \vdots &  &  & \ddots & \\
        1 & 1 & 1 & \cdots & 0
    \end{bmatrix}
    \end{equation}
    which has full rank. So the only possible singular point is $a = 0 \in \R^n$ which also satisfies $f_n(a) = 0$ and is, hence, a singular point. Consequently, $\rlct_a(f_n) = (1,1)$ for $a \in \R^n\setminus \{0\}$ with $f_n(a) = 0$, see Lemma~\ref{lemma:regular}.

    We now turn to the case where $a = 0$. Since we saw that this is the only singularity of $V(f_n)$, we perform the blow-up at the origin, that is, the blow-up along the linear subspace defined by $x_1 = \ldots = x_n = 0$ and see if it yields a log resolution. Note that $f_n$ is invariant under any permutation of the variables and, hence, we need only consider one chart, say the chart $\rho = \rho_1$ associated to the variable $x_1$ which has Jacobian determinant $x_1^{n-1}$. We write
    \[
    f_n = \sum_{j = 2}^n 2x_1x_j + \sum_{\substack{i,j = 2 \\ i<j}}^n 2x_ix_j,
    \]
    giving
    \[
    f_n \circ \rho = 2x_1^2\cdot \underbrace{\left(  \sum_{j = 2}^n x_j + \sum_{\substack{i,j = 2 \\ i <j}}^n x_ix_j\right)}_{g_n(x_1,x_2,\ldots,x_n) :=}.
    \]
    We will now show that the variety associated to $g_n$ is smooth and meets $\{x_1 = 0\}$ transversally which allows us to use Lemma~\ref{lemma:transversal} to treat $g_n$ as a variable and compute the RLCT via Remark~\ref{remark:facts}(4). 

    The Jacobian matrix of $g_n$ is just the gradient
   \begin{equation}\label{equ:gradient}
    \left[ \ \ 0 \ \ \  1 + \sum_{\substack{j = 2 \\ j \neq 2}}^n x_j \ \ \ \cdots \ \ \ 1 + \sum_{\substack{j = 2 \\ j \neq n}}^n x_j \ \ \right].
    \end{equation}
    The vanishing of this vector, again, is a (inhomogeneous) linear system with coefficient matrix as in Equation~(\ref{equation:matrix}), now of dimension $n-1$, so has a unique solution (up to an arbitrary choice of $x_1$) which is $x_j = \frac{-1}{n-2}$ for $j \in \{2,\ldots,n\}$ in the case $n \geq 3$ and has no solution for $n = 2$. For $n \geq 3$, a simple calculation gives
    \[
    g_n\left(x_1, \frac{-1}{n-2},\ldots,\frac{-1}{n-2}\right) = \frac{1-n}{2(n-2)} 
    \]
    and, hence, this point never lies on the variety $V(g_n)$. Consequently, $V(g_n)$ is non-singular for all $n \geq 2$. 

    The tangent spaces at each point $(x_1,\ldots,x_n)$ of $V(x_1)$ and $V(g_n)$, respectively, are two hyperplanes with covectors $[1 \ 0 \ \cdots \ 0]$ and the vector in (\ref{equ:gradient}), respectively. These are non-zero vectors that are orthogonal and, in particular, not colinear. Therefore, there is no containment of the tangent spaces and, hence, the varieties meet transversally.

    By Lemma~\ref{lemma:transversal}, for each point $b \in V(x_1) \cap V(g_n)$, there exists a real analytic isomorphism $\sigma: U_0 \to U_b$, where $U_0$ and $U_b$ are small neighborhoods of $0$ and $b$, respectively, such that $\sigma(0) = b$ and $f_n \circ \rho \circ \sigma = 2x_1^2x_2$ and $\det \jac \rho \circ \sigma = x_1^{n-1}$. We now use Remark~\ref{remark:facts}(4) to compute $\rlct_0(2x_1^2x_2;  x_1^{n-1}) = (\lambda,m)$ which will be the desired pair by Remark~\ref{remark:facts}(2).

    In the case $n = 2$, we deduce for $\rlct_0(x_1^2x_2;x_1) = (\lambda,m)$ that
    \[
     \lambda = \min \left\{ \frac{1+1}{2}, \frac{0+1}{1} \right\} = 1
    \]
    and this minimum is attained twice, so $m = 2$. If $n \neq 2$ then we compute
    \[
    \lambda = \min \left\{ \frac{n-1+1}{2}, \frac{0+1}{1} \right\} = 1
    \]
    and this minimum is only attained once, so $m = 1$.
\end{proof}

\section{Singularity types of the petrol variety}\label{sec:petrol}

In this section, we explicitly compute the singular locus and the RLCT for the petrol variety $V(p_n)$ from Example~\ref{example:fuel-consumption} in several cases feasible by symbolic computations in the computer algebra system \emph{Macaulay2} \cite{M2}. Although the case $n = 2$ differs geometrically from $n\geq 3$, see below, the singular locus within the space \[(\R^*)^{2n} = \{(x_1,\ldots,x_n,y_1,\ldots,y_n) \in \R^{2n} \mid \forall i \ x_i \neq 0 \neq y_i\}\] is the same and given by 
\[
\Delta_n = \{(x_1,\ldots,x_n,y_1,\ldots,y_n) \in (\R^*)^{2n} \mid  x_1 = \cdots = x_n, y_1 = \cdots = y_n\}
\]
in each case, which we will call the \emph{diagonal} in the following.
%\footnote{The reader more familiar with algebraic geometry might be worried that we talk about varieties instead of schemes here. However, as the computations below will show, both the variety $V(p_n)$ and its singular locus saturated along $x_1\cdots x_n \cdot y_1\cdots y_n$ are reduced schemes, and hence we will simply talk about varieties.}

We first present the \emph{Macaulay2} code that computes the polynomial $p_n$ for arbitrary $n \geq 2$. The number $n$ is set to $2$ in the code but can be changed arbitrarily.
\begin{verbatim}
    clearAll
    n = 2;
    R = QQ[x_1..x_n,y_1..y_n];

    prodx = product toList(x_1..x_n);
    prody = product toList(y_1..y_n);
    sx = sum toList(x_1..x_n);
    sy = sum toList(y_1..y_n);

    for i from 1 to n do (
    P_i = product(toList(y_1..y_(i-1))) * product(toList(y_(i+1)..y_n))
    );

    for i from 1 to n do (
    S_i = sum(toList(x_1..x_(i-1))) + sum(toList(x_(i+1)..x_n))
    );

    p = sum(for i from 1 to n list(P_i*x_i))*sy - n*prody*sx
\end{verbatim}
The reader may like to copy this code into the \href{https://www.unimelb-macaulay2.cloud.edu.au/#home}{web interface for \emph{Macaulay2}} and code along with us in the following case distinction.

\subsection{Singular locus for the petrol variety}\label{sec:singular-locus}
We first test whether $p_2$ defines a radical ideal, so that we do not have to care about schemes as opposed to varieties. The command
\begin{verbatim}
    radical(ideal(p)) == ideal(p)
\end{verbatim}
gives \texttt{true} and, hence, it is indeed a radical ideal. Next we compute the list consisting of the defining ideals of the irreducible components of $V(p_2)$ using the following:
\begin{verbatim}
    L = decompose ideal(p)
\end{verbatim}
It returns the list consisting of the two prime ideals $\left(y_{1}-y_{2}\right)$ and $\left(x_{2}y_{1}-x_{1}y_{2}\right)$ that define the two irreducible components of $V(p_2)$. Note that $(p_2)$ is, in particular, not a prime ideal. The first irreducible component is a linear subspace and is therefore smooth. The second one, also known as the \emph{quadric cone}, has Jacobian matrix $[-y_2,y_1,x_2,-x_1]$ which only has rank $0$ at the origin. So, the second ideal also defines a smooth variety outside the origin. Consequently, the singular locus outside the origin is the intersection of the two irreducible components. The first imposes $y_1 = y_2$ which reduces the second to $y_1(x_2 - x_1)$. Since we are interested in singular points in $(\R^*)^4$, we conclude that $y_1 \neq 0$, so $x_1 = x_2$. Therefore, the singular locus of $V(p_2)$ within $(\R^*)^4$ is $V(x_1 - x_2, y_1 - y_2) = \Delta_2$.\\

 For the cases with $n \geq 3$, we need to use \emph{Macaulay2} more intensively. We first run the defining code for $p_n$ by changing $n = 2$ to one of the values $n = 3, \ldots,n = 9$. The case $n = 10$ does not seem feasible with respect to computation time via computations in \emph{Macaulay2} on a 64-bit Windows machine with an AMD Ryzen 7 PRO 250 CPU
at 3.30 GHz and 16 GB RAM. However, our conjecture is that the singular locus within $(\R^*)^{2n}$ is $\Delta_n$ for arbitrary $n$. Using
\begin{verbatim}
    isPrime ideal(p)
\end{verbatim}
which gives back \texttt{true}, we check that $V(p_n)$ is actually an irreducible variety. This is a first difference: while for $n = 2$, we had two irreducible components, we now have just one. 

The singular locus of $V(p_n)$ is the variety defined by $p_n$ and the entries of the Jacobian matrix of $p_n$ (which is just a vector in this case). So, we compute it with the following command:
\begin{verbatim}
    sing = ideal(p,jacobian(p))
\end{verbatim}
As we are only interested in singularities outside the coordinate hyperplanes, we can remove these from the singular locus, which is algebraically done by performing the \emph{saturation} of the defining ideal of the singular locus along $x_1\cdots x_n\cdot y_1\cdots y_n$. In \emph{Macaulay2}, we conduct this via the following:
\begin{verbatim}
    D = saturate(sing, prodx*prody)
\end{verbatim}
Note that we defined \texttt{prodx} and \texttt{prody} before. In a last step, we define the diagonal $\Delta_n$ via
\begin{verbatim}
    Delta = (
    ideal(for i from 2 to n list(x_1 - x_i)) + 
    ideal(for i from 2 to n list(y_1 - y_i))
    )
\end{verbatim}
and check
\begin{verbatim}
    D == Delta
\end{verbatim}
which gives \texttt{true}. We have therefore shown that, for $n \in \{3,\ldots,9\}$, the variety $V(p_n)$ is irreducible and its singular locus within $(\R^*)^{2n}$ is $\Delta_n$. 

\subsection{RLCT for the petrol variety} We are able to compute the RLCT of the petrol variety at any point in the positive orthant, which are the points relevant in the application, for the cases $n \in \{2,3,4\}$. We do $n=2$ by hand and use further \textit{Macaulay2} calculations for the remaining two cases. The cases $n \geq 5$ seem not to be feasible by our computation on a 64-bit Windows machine with an AMD Ryzen 7 PRO 250 CPU
at 3.30 GHz and 16 GB RAM. Since the singular locus on the positive orthant is just the diagonal $\Delta_n$, our strategy is to blow up along this center.

We start with the case $n=2$. Recall that
\[
p_2 = (x_1y_2 + x_2y_1)(y_1 + y_2) - 2(x_1+x_2)y_1y_2.
\]

\begin{proposition}\label{prop:petrol2}
    Let $a = (a_1,a_2,b_1,b_2) \in \R_{\gt 0}^{4}$. Then
    \[
    \rlct_a(p_2) = \begin{cases}
        (1,2) \text{ if } a \in \Delta_2, \\
        (1,1) \text{ else}.
    \end{cases}
    \]
\end{proposition}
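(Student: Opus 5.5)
The key observation is that $p_2$ factors over $\Q$. Expanding gives
\[
p_2 = x_1y_2^2 - x_1y_1y_2 + x_2y_1^2 - x_2y_1y_2 = (y_1-y_2)(x_2y_1 - x_1y_2),
\]
which matches the two prime components $(y_1-y_2)$ and $(x_2y_1-x_1y_2)$ found in Section~\ref{sec:singular-locus}. Write $h_1 = y_1 - y_2$ and $h_2 = x_2y_1 - x_1y_2$. As in Theorem~\ref{theorem:freshman}, I understand the statement for points $a \in \R_{>0}^4$ with $p_2(a) = 0$. For such $a$, at least one of $h_1,h_2$ vanishes at $a$, and I will split into cases according to whether one or both vanish.

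\emph{Step 1: both factors are smooth on the positive orthant.} The gradient of $h_1$ is the constant vector $(0,0,1,-1)$. The gradient of $h_2$ is $(-y_2,y_1,x_2,-x_1)$, which is nonzero whenever $b_1,b_2>0$. So $V(h_1)$ and $V(h_2)$ are non-singular at every point of $\R_{>0}^4$.

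\emph{Step 2: the case $a\notin\Delta_2$.} If both factors vanished at $a$, we would have $b_1=b_2$ and $a_2b_1=a_1b_2$. Since $b_1>0$, this forces $a_1=a_2$, so $a\in\Delta_2$. Hence for $a\notin\Delta_2$ exactly one factor, say $h_i$, vanishes at $a$, and the other factor $h_j$ is a non-vanishing analytic function on a small neighborhood $W$ of $a$. Because $|p_2|^{-z} = |h_j|^{-z}|h_i|^{-z}$ and $|h_j|$ is bounded above and below on $W$, the poles of the zeta function of $p_2$ coincide with those of $h_i$. Alternatively, the gradient of $p_2$ at $a$ equals $h_j(a)\nabla h_i(a)\neq 0$, so $V(p_2)$ is non-singular at $a$. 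Either way, Lemma~\ref{lemma:regular} gives $\rlct_a(p_2) = (1,1)$.

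\emph{Step 3: the case $a\in\Delta_2$.} Here $a=(s,s,t,t)$ with $s,t>0$, and both factors vanish at $a$. The gradients at $a$ are $(0,0,1,-1)$ and $(-t,t,s,-s)$. Because $t\neq 0$, these vectors are linearly independent, so $V(h_1)$ and $V(h_2)$ meet transversally at $a$. Lemma~\ref{lemma:transversal} then provides a real analytic isomorphism $\sigma$ with $\sigma(0)=a$ and $p_2\circ\sigma = x_1x_2$.

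By Remark~\ref{remark:facts}(3), the change of variables gives $\rlct_a(p_2) = \rlct_0(x_1x_2;|\det \jac\sigma|)$. The Jacobian determinant of $\sigma$ is analytic and non-vanishing near $0$, so it can be dropped by Remark~\ref{remark:facts}(1). Finally, Remark~\ref{remark:facts}(4) gives $\lambda=\min\{1/1,1/1\}=1$, with the minimum attained twice, so $\rlct_a(p_2)=(1,2)$.

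I do not expect a real obstacle here. The only points needing care are spotting the factorization, which the Macaulay2 decomposition already suggests, and using positivity of the coordinates. Positivity enters twice: to exclude the non-smooth point of the quadric cone (the origin), and to guarantee that the two gradients are independent on $\Delta_2$.
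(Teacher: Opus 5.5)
Your proof is correct, and it takes a different and shorter route than the paper.

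\textbf{What the paper does.} For $a\in\Delta_2$ the paper follows its general recipe. It translates $\Delta_2$ to the center $x_2=y_2=0$, obtaining $p_2\circ\sigma=y_2(x_1y_2-x_2y_1)$. It then blows up along this codimension-two center and checks in both charts that the pullbacks $x_2^2y_2(x_1y_2-y_1)$ and $y_2^2(x_1-x_2y_1)$, with Jacobians $x_2$ and $y_2$, are normal crossings along the fiber, reading off $(1,2)$.

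\textbf{What you do instead.} You use the factorization $p_2=(y_1-y_2)(x_2y_1-x_1y_2)$ to see that $p_2$ is already a simple normal crossing divisor at every point of $\Delta_2\cap\R_{>0}^4$. The gradients $(0,0,1,-1)$ and $(-t,t,s,-s)$ are independent since $t\neq 0$, so Lemma~\ref{lemma:transversal} applies directly and no blow-up is needed. In fact the paper's blow-up, taken along the transversal intersection of the two components, is superfluous for computing the RLCT. Already after translation, the gradients of $y_2$ and $x_1y_2-x_2y_1$ at $(s,0,t,0)$ are independent.

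\textbf{What each route buys.} Yours gives brevity and a conceptual explanation. Near the diagonal, the $n=2$ petrol variety is locally two transversally crossing hypersurfaces, i.e.\ the node $xy$ of Example~\ref{example:easy-log-res}(2) times $\R^2$. The paper's route gives uniformity. The translate-and-blow-up template is exactly what gets automated in Macaulay2 for Proposition~\ref{prop:petroln}, where $p_n$ is irreducible for $n\ge 3$ and there is no factorization to exploit.

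\textbf{Minor remarks.}
\begin{itemize}
\item Your reading that $a$ must lie on $V(p_2)$ is the right one. For $p_2(a)\neq 0$ the zeta function has no poles, and the paper's proof tacitly assumes this too.
\item Your version of Remark~\ref{remark:facts}(3), with the factor $|\det\jac\sigma|$, is the correct change-of-variables statement. The paper's displayed formula omits it, which is harmless here because that factor is a unit.
\item In Step 2, the ``bounded above and below'' argument is cleanest in the volume formulation, via a sandwich of tubes. Your gradient argument already settles that case rigorously.
\end{itemize}
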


\begin{proof}
    We saw above that $V(p_2)$ is a smooth variety at points $a \notin \Delta_2$ and hence has RLCT equal to $(1,1)$ at such points.
    Now suppose that $a \in \Delta_2$.
    To pull back the diagonal $\Delta_2$ to the subspace defined by $x_2= y_2 = 0$, we apply the linear isomorphism $\sigma$ with $x_2 \mapsto x_2 + x_1$ and $y_2 \mapsto y_2 + y_1$. A simple calculation yields
    \[
    p_2 \circ \sigma =y_2( x_{1}y_{2} -x_{2}y_{1}).
    \]
    We now apply the blow-up $\rho$ along the linear space $C$ defined by $x_2 = y_2 = 0$ and consider $\rho_x$ and $\rho_y$, the two charts corresponding to $x_2$ and $y_2$, respectively. The first one yields
    \[
    p_2 \circ \sigma \circ \rho_x = x_2^2y_2(x_1y_2 - y_1)
    \]
    and has Jacobian determinant $x_2$. The three varieties associated to $x_2$, $y_2$ and $(x_1y_2 - y_1)$ are smooth and meet transversally in all their pairwise intersection points within the fiber $(\sigma \circ \rho_x)^{-1}(a) = \{(a_1,0,b_1,c)  \mid c \in \R_{\gt 0}\}$ that we have to consider by Remark~\ref{remark:facts}. More precisely, $x_2 = 0$ meets $y_2 = 0$ transversally and the intersection point $(a_1,0,b_1,0)$ lies in $(\sigma \circ \rho_x)^{-1}(a)$. Moreover, by a straightforward computation of tangent spaces, $x_2 = 0$ and  $x_1y_2 - y_1 = 0$ meet transversally in their only intersection point $(a_1,0,b_1,b_1/a_1)$ within the fiber. The remaining two varieties do not meet at all. Via Lemma~\ref{lemma:transversal} and Remark~\ref{remark:facts}(4), we derive by considering the monomials $x_2^2$ and $y_2$ that the first component of the RLCT on this chart equals
\[
\lambda = \min \left\{ \frac{1+1}{2}, \frac{0+1}{1}\right\} = 1
\]
with multiplicity $m = 2$.

The $y_2$-chart gives $p_2 \circ \sigma \circ \rho_y = y_2^2(x_1-x_2y_1)$ with Jacobian determinant $y_2$. The two smooth varieties $y_2$ and $x_1-x_2y_1$ meet transversally at their unique intersection point $(a_1,a_1/b_1,b_1,0)$ in $(\sigma \circ \rho_y)^{-1}(a)$ and hence the RLCT on this chart is also $(1,2)$ which makes $\rlct_a(p_2) = (1,2)$.
\end{proof}

The cases $n = 3$ and $n = 4$ behave more uniformely as we will show now.

\begin{proposition}\label{prop:petroln}
    Let $a \in \R_{\gt 0}^{2n}$ with $n \in \{3,4\}$. Then
    \[
    \rlct_a(p_n) = \begin{cases}
        (1,2) \text{ if } a \in \Delta_n, \\
        (1,1) \text{ else}.
    \end{cases}
    \]
\end{proposition}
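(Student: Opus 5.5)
The proof follows the $n=2$ template of Proposition~\ref{prop:petrol2}. The singular locus of $V(p_n)$ inside $(\R^*)^{2n}$ equals $\Delta_n$; this was verified with \emph{Macaulay2} in Section~\ref{sec:singular-locus}. So for $a \in \R_{\gt 0}^{2n}\setminus \Delta_n$ with $p_n(a)=0$, Lemma~\ref{lemma:regular} gives $\rlct_a(p_n)=(1,1)$ directly.

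Now let $a\in\Delta_n$, say $a=(\alpha,\ldots,\alpha,\beta,\ldots,\beta)$. First apply the linear isomorphism $\sigma$ with $x_i\mapsto x_i+x_1$ and $y_i\mapsto y_i+y_1$ for $i\geq 2$. This moves $\Delta_n$ to the linear subspace $C=V(x_2,\ldots,x_n,y_2,\ldots,y_n)$ of codimension $r=2n-2$. Next blow up along $C$. By the symmetry of $p_n$ under simultaneous permutation of the pairs $(x_i,y_i)$, $i\geq 2$, only two chart types need to be checked: an $x_2$-chart and a $y_2$-chart. Each has Jacobian determinant $x_2^{2n-3}$, respectively $y_2^{2n-3}$.

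Since $\Delta_n$ lies in $V(p_n)$ and $p_n$ is singular there, $p_n\circ\sigma$ vanishes to order at least $2$ along $C$. I expect the order to be exactly $2$. Then in the $x_2$-chart the pullback factors as $x_2^2\cdot h$, where $h$ is the strict transform, and the $y_2$-chart behaves the same way. In the $n=2$ case the strict transform also carried an extra smooth factor $y_2$ transversal to the exceptional divisor, which produced $m=2$. For $n\in\{3,4\}$ I would let \emph{Macaulay2} compute these pullbacks, factor them, and compute the singular locus of each strict transform, saturated along the exceptional divisor and restricted to the fiber $(\sigma\circ\rho)^{-1}(a)$. The goal is to verify that, over points of the fiber with positive original coordinates, every factor is smooth and all factors meet the exceptional divisor and each other transversally. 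For this I would check with \emph{Macaulay2} that the ideal generated by the factors and the relevant minors of their Jacobian is the unit ideal after saturation.

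Granting this, Lemma~\ref{lemma:transversal}, Remark~\ref{remark:facts}(2) and Remark~\ref{remark:facts}(4) reduce the problem to monomials of the form $x_2^2 u_1^{c_1}\cdots$ with weight $x_2^{2n-3}$. The exceptional divisor contributes the ratio $(2n-3+1)/2=n-1\geq 2$. Any strict-transform factor with multiplicity $1$ and weight exponent $0$ contributes $1$. Hence $\lambda=1$, and $m$ counts the strict-transform factors passing through the minimizing fiber point. To get $m=2$, as the statement claims, two such factors must meet transversally at some fiber point. This is the analogue of the pair $y_2$ and $x_1y_2-y_1$ for $n=2$, except that now the exceptional divisor does not help attain the minimum. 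So the strict transform near that point must locally look like two transversal smooth branches, for example a reducible factorization or a normal crossing of a single irreducible component.

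The main obstacle is exactly this: controlling the strict transform over the fiber. Two things must be shown. First, there is a fiber point where two transversal smooth branches of the strict transform meet, giving $(1,2)$. Second, there are no worse singularities anywhere in the fiber. If the strict transform is not yet normal crossing at some fiber point, I would first try a second blow-up along that (linear, after a suitable $\sigma$) locus. After such a blow-up the new exceptional exponents give ratios strictly larger than $1$, so $\lambda=1$ is unaffected and only the count $m$ needs to be checked there. This is the part that demands heavy computation, and it explains why the argument is restricted to $n\leq 4$.
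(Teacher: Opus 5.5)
Your setup is the paper's: translate $\Delta_n$ to $C=V(x_2,\ldots,x_n,y_2,\ldots,y_n)$, blow up, and let \emph{Macaulay2} inspect the strict transform. You also have the correct chart Jacobian $x_2^{2n-3}$. The gap is the step you call the main obstacle, and it cannot be closed.

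In the $x_2$-chart the pullback factors as $x_2^2 f$ with a single strict-transform factor $f$. The paper's computation shows that $V(f)$ is non-singular and transversal to $V(x_2)$ at every point off the transformed coordinate hyperplanes. These hyperplanes miss the whole fiber over $a$, since there the transformed product $x_1\cdots x_ny_1\cdots y_n$ equals $\alpha^n\beta^n\neq 0$. So no fiber point carries two branches, and no further blow-up is needed. Your own bookkeeping then gives monomial data $x_2^2u$ with weight $x_2^{2n-3}$. Hence $\lambda=\min\{n-1,1\}=1$ is attained once, and $\rlct_a(p_n)=(1,1)$ for $a\in\Delta_n$, contradicting the claimed $(1,2)$.

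The paper reaches $(1,2)$ only by using the weight $x_2$. That is the chart Jacobian only when the center has codimension $2$, i.e.\ for $n=2$. There the exceptional ratio $2/2=1$ supplies the second minimizing index. The factors $y_2$ and $x_1y_2-y_1$ do not meet over $a$, so your analogy with $n=2$ is off. Note also that saturating along the exceptional divisor, as you propose, would discard exactly the singularities lying over $a$. The paper instead saturates along the transformed coordinate hyperplanes.

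You can confirm the value $(1,1)$ without a computer. On $\R_{\gt 0}^{2n}$ put $r_i=x_i/y_i$. Then $p_n=y_1\cdots y_n\cdot q$ with $q=\sum_i r_i\bigl(\sum_k y_k-ny_i\bigr)=-n\sum_i(r_i-\bar r)(y_i-\bar y)$, where bars denote arithmetic means. The substitution $(x,y)\mapsto(r,y)$ is an analytic isomorphism with Jacobian $\prod_i y_i\neq 0$, and $y_1\cdots y_n$ is a unit. So $\rlct_a(p_n)$ equals the RLCT of $q$ at the corresponding point.

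In orthonormal coordinates on $(1,\ldots,1)^\perp$ one gets $q=-n\sum_{k=1}^{n-1}s_kt_k$ with $\Delta_n=\{s=t=0\}$. Blow up this center. The $s_1$-chart gives $s_1^2\bigl(t_1+\sum_{k\geq 2}s_kt_k\bigr)$ with weight $s_1^{2n-3}$, and the bracket is smooth and transversal to $V(s_1)$. All other charts are symmetric. Thus the RLCT on $\Delta_n$ is $(1,2)$ for $n=2$ but $(1,1)$ for every $n\geq 3$, exactly parallel to Theorem~\ref{theorem:freshman}.

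Carried out correctly, your approach disproves the stated value on $\Delta_n$, and the conjecture for $n\geq 3$, rather than proving it.
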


\begin{proof}
    At points $a \notin \Delta_n$, the variety $V(p_n)$ is smooth as shown above and hence has RLCT equal to $(1,1)$. For $a \in \Delta_n$, we perform computations in \textit{Macaulay2}. We again load the defining code from above for the polynomial $p_n$. We then transform the diagonal $\Delta_n$ to the subspace defined by $x_2 = \ldots = x_n = 0$ and $y_2 = \ldots=y_n = 0$ by applying the map $x_i \mapsto x_i + x_1$ and $y_i \mapsto y_i + y_1$ for $i \neq 1$. Moreover, we transform the union of the coordinate hyperplanes via this map. All of this can be done with the piece of code
    \begin{verbatim}
phi = map(R, R,
    toList(
        apply(1..n, i -> if i == 1 then x_1 else x_i + x_1)
        |
        apply(1..n, i -> if i == 1 then y_1 else y_i + y_1)
        )
    );
p' = phi(p);
S' = phi(prodx*prody);
    \end{verbatim}
Now, we apply the blow-up along the transformation of $\Delta_n$ and consider the chart corresponding to $x_2$, which is done using the following piece of code:
\begin{verbatim}
blowupChartX2 = map(R, R,
    toList(
        apply(1..n, i -> if i <= 2 then x_i else x_2*x_i)
        |
        apply(1..n, i -> if i == 1 then y_i else x_2*y_i)
        )
    );

p'' = blowupChartX2(p');
S'' = blowupChartX2(S');
\end{verbatim}
The command \texttt{factor p''} yields a product of two polynomials of which the first one is $x_2^2$. We extract the second one by defining
\begin{verbatim}
    f = (toList factor p'')#1#0;
\end{verbatim}
Now, we compute the singular locus of $V(f)$, remove its components contained inside the union of the transformed coordinate hyperplanes, which we can exclude as we picked $a$ inside the positive orthant, and compute the dimension of the remaining singular locus:
\begin{verbatim}
    Singf = ideal(f,jacobian(f));

    Df = saturate(Singf, S'');

    dim Df
\end{verbatim}
This gives back dimension $-1$ which is the way that \emph{Macaulay2} expresses that the variety defined by \texttt{Df} is empty. Hence, $V(f)$ is non-singular at points that interest us. Finally, we check that $V(x_2)$ and $V(f)$ meet transversally. For this, we determine the set of those points in $(\R^*)^{2n}$ on which the Jacobian matrix of $(x_2,f)$ has rank smaller $2$, which we show to be the empty set. This is exactly the set of points where all $2\times 2$ minors of the Jacobian matrix vanish. So, after removing the components contained in the coordinate hyperplanes via saturation, the ideal generated by these minors should be equal to the ambient polynomial ring. We check this with
\begin{verbatim}
    saturate(minors(2,jacobian(ideal(x_2,f))),S'')
\end{verbatim}
which gives back \texttt{ideal 1}, the ideal generated by $1$ which is the ambient ring.

So we can handle $f$ as a variable different from $x_2$ by Lemma~\ref{lemma:transversal}. To sum it up, we are given the function $x_2^2f$ and the Jacobian determinant $x_2$ which gives the first component of the RLCT on this chart as
\[
\lambda = \min \left\{ \frac{1+1}{2}, \frac{0+1}{1}  \right\} = 1
\]
with multiplicity $m = 2$. The chart corresponding to $y_2$ works along the same lines by implementing the obvious changes to the code. Since $p_n$ is invariant under permutations of the $x_i$ and the $y_i$, respectively, this completes the proof.
\end{proof}

Based on our insights in Section~\ref{sec:singular-locus} and in Propositions~\ref{prop:petrol2} and~\ref{prop:petroln}, we end our exposition with a conjecture on the values of the RLCT of $p_n$ for arbitrary $n$.

\begin{conj}
    Let $n \geq 2$ be an integer and $a \in \R_{\gt 0}^{2n}$. Then
    \[
    \rlct_a(p_n) = \begin{cases}
        (1,2) \text{ if } a \in \Delta_n, \\
        (1,1) \text{ else}.
    \end{cases}
    \]
\end{conj}

\section*{Acknowledgements}
V.~Fadinger-Held thanks his students for asking the right questions and always keeping him busy.
 D.~Windisch was supported by the FWO grants G0F5921N (Odysseus) and G023721N, and by the KU Leuven grant iBOF/23/064.

\bibliographystyle{amsalpha}
\bibliography{bibliography}
   \end{document}